\newcommand{\U}{{\mathcal U}}
\newcommand{\C}{{\mathbb C}}
\newcommand{\Z}{{\mathbb Z}}
\newcommand{\Q}{{\mathbb Q}}
\newcommand{\hyp}{{\mathbb H}}
\newcommand{\supp}{\operatorname{supp}}
\newcommand{\cosupp}{\operatorname{cosupp}}
\newcommand{\im}{\mathop{\rm im}\nolimits}
\newcommand{\arrow}[1]{\stackrel{#1}{\longrightarrow}}
\newcommand{\Adot}{\mathbf A^\bullet}
\newcommand{\Idot}{\mathbf I^\bullet}
\newcommand{\Jdot}{\mathbf J^\bullet}
\newcommand{\Fdot}{\mathbf F^\bullet}   
\newcommand{\Pdot}{\mathbf P^\bullet}
\newcommand{\vdual}{{\mathcal D}}
\newcommand{\coker}{{\operatorname{coker}}}
\newcommand{\id}{{\operatorname{id}}}
\newtheorem{defn0}{Definition}[section]
\newtheorem{prop0}[defn0]{Proposition}
\newtheorem{conj0}[defn0]{Conjecture}
\newtheorem{thm0}[defn0]{Theorem}
\newtheorem{lem0}[defn0]{Lemma}
\newtheorem{corollary0}[defn0]{Corollary}
\newtheorem{example0}[defn0]{Example}
\newtheorem{remark0}[defn0]{Remark}
\newtheorem{question0}[defn0]{Question}
\newtheorem{exercise0}[defn0]{Exercise}
\newenvironment{defn}{\begin{defn0}}{\end{defn0}}
\newenvironment{prop}{\begin{prop0}}{\end{prop0}}
\newenvironment{thm}{\begin{thm0}}{\end{thm0}}
\newenvironment{lem}{\begin{lem0}}{\end{lem0}}
\newenvironment{rem}{\begin{remark0}\rm}{\end{remark0}}
\newenvironment{ques}{\begin{question0}\rm}{\end{question0}}
\newcommand{\defref}[1]{Definition~\ref{#1}}
\newcommand{\propref}[1]{Proposition~\ref{#1}}
\newcommand{\thmref}[1]{Theorem~\ref{#1}}
\newcommand{\lemref}[1]{Lemma~\ref{#1}}
\newcommand{\secref}[1]{Section~\ref{#1}}
\title[Intermediate Extension, Vanishing Cycles, and Eigenspace of One]{The Intermediate Extension, Vanishing Cycles, and  Perverse Eigenspace of One}
\subjclass[2010]{32S25, 32S15, 32S55}
\author{David B. Massey}
\date{}
\begin{document}

\begin{abstract} We prove a number of  results involving the kernel of the identity minus the monodromy on the vanishing cycles.
\end{abstract}

\maketitle




\section {Introduction}

Suppose that $\U$ is a non-empty open subset of $\C^{n+1}$ and $f:\U\rightarrow\C$ is a nowhere locally constant, reduced, complex analytic function such that $f^{-1}(0)$ is non-empty. Then $V(f)=f^{-1}(0)$ is an affine hypersurface inside $\U$. 

In Theorem 2.5 of  \cite{perveigen1}, we proved that we have a short exact sequence in the abelian category of perverse sheaves on $V(f)$ (using $\Z$ as our base ring):
$$
0\to \ker\{\operatorname{id}-\widetilde T_f\}\to \Z^\bullet_{V(f)}[n]\rightarrow \Idot_{V(f)}\to 0,\leqno{(\dagger)}
$$

\noindent where $\Idot_{V(f)}$ is the intersection cohomology complex on $V(f)$ (with constant $\Z$ coefficients) and $\widetilde T_f$ is the vanishing cycle monodromy operator.

\medskip

In Theorem 2.6 of \cite{perveigen1}, we  note that the dual statement is also true. That dual result is, letting $j:V(f)\rightarrow\U$ denote the inclusion,  there exists a short exact sequence
$$
0\to \Idot_{V(f)}\to j^![1]\Z_\U^\bullet[n+1]\to\coker\{\operatorname{id}-\widetilde T_f\}\to 0,\leqno{(\ddagger)}
$$
in $\operatorname{Perv}(V(f))$.

\medskip

In this paper, we will prove a number of other results involving $\ker\{\operatorname{id}-\widetilde T_f\}$, which is the perverse eigenspace of one for the vanishing cycle monodromy; in particular, in \thmref{thm:mainshort}, we prove generalizations of ($\dagger$) and ($\ddagger$).

\medskip

\section{Preliminary notation, definitions and results}\label{sec:basic}

In this section, we give notations and well-known results on the derived category and perverse sheaves. Basic references are \cite{bbd}, \cite{inthom2}, \cite{boreletal}, \cite{iversen}, \cite{kashsch}, \cite{schurbook}, \cite{dimcasheaves}, and \cite{MaximIH}. We should remark that we adopt a convention which is now standard; because we will work solely in the derived category, we will {\bf not} write an $R$ in front of derived functors since all of our functors are derived.

\medskip

Let $X$ be a complex analytic space. We use $D^b_c(X)$ to denote the derived category of bounded, constructible complexes of sheaves of $\Z$-modules on $X$. We let ${}^{{}^\mu}\mathbf D^{{}^{\leqslant 0}}(X)$ (respectively, ${}^{{}^\mu}\mathbf D^{{}^{\geqslant 0}}(X)$) denote the full subcategory of $D^b_c(X)$ of those complexes which satisfy the support (respectively, cosupport) condition. We note the abelian category of perverse sheaves on $X$ by $\operatorname{Perv}(X)$; thus $\operatorname{Perv}(X)={}^{{}^\mu}\mathbf D^{{}^{\leqslant 0}}(X)\cap {}^{{}^\mu}\mathbf D^{{}^{\geqslant 0}}(X)$. We let ${}^{\mu}\hskip -0.02in H^k$ denote the degree $k$ cohomology with respect to the perverse $t$-structure.
\medskip

Let $f:X\rightarrow\C$ be a nowhere locally constant complex analytic function on $X$. We let $j: V(f)\hookrightarrow X$ be the (closed) inclusion and $i: X\backslash V(f)\hookrightarrow X$ be the (open) inclusion. Let $\psi_f[-1]$ and $\phi_f[-1]$ denote the shifted nearby cycle and vanishing cycle functors, respectively, from $D^b_c(X)$ to $D^b_c(V(f))$; we denote the respective Milnor monodromy (natural) automorphisms of these functors by $T_f$ and $\widetilde T_f$.

The functors $\psi_f[-1]$ and $\phi_f[-1]$ are $t$-exact with respect to the perverse $t$-structure. This means that each of these functors takes ${}^{{}^\mu}\mathbf D^{{}^{\leqslant 0}}(X)$ (respectively, ${}^{{}^\mu}\mathbf D^{{}^{\geqslant 0}}(X)$) to ${}^{{}^\mu}\mathbf D^{{}^{\leqslant 0}}(V(f))$ (respectively, ${}^{{}^\mu}\mathbf D^{{}^{\geqslant 0}}(V(f))$). In particular, 
this means that $\psi_f[-1]$ and $\phi_f[-1]$ take perverse sheaves to perverse sheaves.

\bigskip

For $\Adot\in D^b_c(X)$, there are natural distinguished triangles:

\bigskip

\noindent {\bf The vanishing triangle}:
$$
\rightarrow j^*[-1]\Adot\xrightarrow{\operatorname{comp}}\psi_f[-1]\Adot\xrightarrow{\operatorname{can}}\phi_f[-1]\Adot\arrow{[1]};
$$

\medskip

and

\bigskip

\noindent {\bf The dual vanishing triangle}:
$$
\rightarrow \phi_f[-1]\Adot\xrightarrow{\operatorname{var}}\psi_f[-1]\Adot\xrightarrow{\operatorname{pmoc}} j^![1]\Adot\arrow{[1]}.
$$

\bigskip

Automorphisms of these triangles are given respectively by $(\operatorname{id}, T_f, \widetilde T_f)$ and $(\widetilde T_f, T_f, \operatorname{id})$. Furthermore, $\operatorname{var}\circ\operatorname{can}=\operatorname{id}-T_f$ and $\operatorname{can}\circ\operatorname{var}=\operatorname{id}-\widetilde T_f$.

\bigskip

It is very helpful to give a name to one more map:

\begin{defn}
We let $\omega_f:= \operatorname{pmoc}\circ\operatorname{comp}$ be the natural transformation from $j^*[-1]$ to  $j^![1]$ and refer to this as the {\bf Wang transformation} (or the {\bf Wang morphism} for a given $\Adot$).
\end{defn}

\bigskip

Let $d$ be an integer, and let $f:Y\rightarrow X$ be a morphism of complex spaces such that, for all $\bold x\in X$, 
$\operatorname{dim}f^{-1}(\bold x)\leqslant d$. Then,

\vskip .1in

\noindent 1) $f^*$ sends ${}^{{}^\mu}\bold D^{{}^{\leqslant 0}}(X)$ to ${}^{{}^\mu}\bold D^{{}^{\leqslant d}}(Y)$;

\vskip .1in

\noindent 2) $f^!$ sends ${}^{{}^\mu}\bold D^{{}^{\geqslant 0}}(X)$ to ${}^{{}^\mu}\bold D^{{}^{\geqslant -d}}(Y)$;

\vskip .1in

\noindent 3) if $\Fdot\in {}^{{}^\mu}\bold D^{{}^{\leqslant 0}}(Y)$ and  $f_!\Fdot\in \bold D^b_c(X)$, then $f_!\Fdot\in
{}^{{}^\mu}\bold D^{{}^{\leqslant d}}(X)$;

\vskip .1in

\noindent 4) if $\Fdot\in {}^{{}^\mu}\bold D^{{}^{\geqslant 0}}(Y)$ and  $f_*\Fdot\in \bold D^b_c(X)$, then $f_*\Fdot\in
{}^{{}^\mu}\bold D^{{}^{\geqslant -d}}(X)$.

\vskip 0.3in

For closed embeddings, we also have the following:

\vskip 0.1in

\noindent Let $g_1, \dots, g_e$ be complex analytic functions on $X$. Let $m$ denote the inclusion of $Y:=V(g_1, \dots, g_e)$ into $X$. Then,

\vskip 0.1in

\noindent i) $m^*$ sends ${}^{{}^\mu}\bold D^{{}^{\geqslant 0}}(X)$ to ${}^{{}^\mu}\bold D^{{}^{\geqslant -e}}(Y)$;

\vskip 0.1in

\noindent ii) $m^!$ sends ${}^{{}^\mu}\bold D^{{}^{\leqslant 0}}(X)$ to ${}^{{}^\mu}\bold D^{{}^{\leqslant e}}(Y)$.

\medskip

\section{A Fundamental Exact Sequence}

Suppose that $\Pdot\in\operatorname{Perv}(X)$. Then, applying perverse cohomology to the vanishing and dual vanishing triangles, we immediately conclude:

\begin{prop}\label{prop:prewang} ${}^{\mu}\hskip -0.02in H^k(j^![1]\Pdot)$ is possibly non-zero only for $k=-1, 0$, ${}^{\mu}\hskip -0.02in H^k(j^*[-1]\Pdot)$ is possibly non-zero only for $k=0, 1$, and there are exact sequences

\smallskip

$$
0\rightarrow {}^{\mu}\hskip -0.02in H^0(j^*[-1]\Pdot)\xrightarrow{{}^{\mu}\hskip -0.02in H^0(\operatorname{comp})}\psi_f[-1]\Pdot\xrightarrow{\operatorname{can}}\phi_f[-1]\Pdot\rightarrow {}^{\mu}\hskip -0.02in H^1(j^*[-1]\Pdot)\rightarrow 0;
$$

\medskip

and
$$
0\rightarrow {}^{\mu}\hskip -0.02in H^{-1}(j^![1]\Pdot)\rightarrow \phi_f[-1]\Pdot\xrightarrow{\operatorname{var}}\psi_f[-1]\Pdot\xrightarrow{{}^{\mu}\hskip -0.02in H^0(\operatorname{pmoc})}{}^{\mu}\hskip -0.02in H^0(j^![1]\Pdot)\rightarrow 0.
$$

\medskip

Thus, ${}^{\mu}\hskip -0.02in H^{0}(j^*[-1]\Pdot)\cong\ker\{\operatorname{can}\}$, ${}^{\mu}\hskip -0.02in H^{1}(j^*[-1]\Pdot)\cong\coker\{\operatorname{can}\}$, ${}^{\mu}\hskip -0.02in H^{-1}(j^![1]\Pdot)\cong\ker\{\operatorname{var}\}$, and ${}^{\mu}\hskip -0.02in H^{0}(j^![1]\Pdot)\cong\coker\{\operatorname{var}\}$.

\end{prop}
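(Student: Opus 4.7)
The plan is to apply the perverse cohomology long exact sequence to each of the two distinguished triangles of \secref{sec:basic}. The crucial input is already available: by $t$-exactness of $\psi_f[-1]$ and $\phi_f[-1]$, both $\psi_f[-1]\Pdot$ and $\phi_f[-1]\Pdot$ are perverse sheaves on $V(f)$, so their $\pcoh^k$ vanishes for every $k\neq 0$.

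First I will apply $\pcoh^\bullet$ to the vanishing triangle $j^*[-1]\Pdot\xrightarrow{\operatorname{comp}}\psi_f[-1]\Pdot\xrightarrow{\operatorname{can}}\phi_f[-1]\Pdot\arrow{[1]}$. In the resulting long exact sequence, $\pcoh^k(\psi_f[-1]\Pdot)$ and $\pcoh^k(\phi_f[-1]\Pdot)$ are zero outside of degree $0$, so for every $k\neq 0,1$ both neighbors of $\pcoh^k(j^*[-1]\Pdot)$ vanish, forcing $\pcoh^k(j^*[-1]\Pdot)=0$. The surviving terms, in degrees $k=0$ and $k=1$, splice together into exactly the first displayed four-term exact sequence; the middle arrow is literally $\operatorname{can}$ because $\operatorname{can}\colon\psi_f[-1]\Pdot\to\phi_f[-1]\Pdot$ is already a morphism of perverse sheaves and is therefore its own $\pcoh^0$.

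The same recipe applied to the dual vanishing triangle then produces the second four-term exact sequence and confines the possibly nonzero perverse cohomology of $j^![1]\Pdot$ to degrees $-1$ and $0$. The four isomorphisms $\pcoh^0(j^*[-1]\Pdot)\cong\ker\{\operatorname{can}\}$, $\pcoh^1(j^*[-1]\Pdot)\cong\coker\{\operatorname{can}\}$, $\pcoh^{-1}(j^![1]\Pdot)\cong\ker\{\operatorname{var}\}$, and $\pcoh^0(j^![1]\Pdot)\cong\coker\{\operatorname{var}\}$ are then read off directly from exactness in the two sequences.

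There is no substantial obstacle here; the argument is pure long exact sequence bookkeeping, with the $t$-exactness of the shifted nearby and vanishing cycle functors doing all of the real work. The only sanity check worth noting is that the connecting maps in the long exact sequences coincide with $\pcoh^0(\operatorname{comp})$ and $\pcoh^0(\operatorname{pmoc})$, as asserted, and this is automatic from the functoriality of $\pcoh^\bullet$ applied to the given morphisms in the triangles.
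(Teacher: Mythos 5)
Your proof is correct and is essentially identical to the paper's: the paper simply notes that the proposition follows immediately by applying perverse cohomology to the vanishing and dual vanishing triangles, using the $t$-exactness of $\psi_f[-1]$ and $\phi_f[-1]$, which is exactly the long-exact-sequence bookkeeping you spell out.
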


\medskip

\begin{thm}\label{thm:wang} Suppose that $\Pdot$ is a perverse sheaf on $X$. Then, there is an exact sequence in $\operatorname{Perv}(V(f))$:

$$
0\rightarrow\frac{\ker\{\id-\widetilde T_f\}}{\ker\{\operatorname{var}\}}\longrightarrow{}^{\mu}\hskip -0.02in H^0(j^*[-1]\Pdot)\xlongrightarrow{ {}^{\mu}\hskip -0.02in H^0(\omega_f)}{}^{\mu}\hskip -0.02in H^0(j^![1]\Pdot)\longrightarrow\frac{\operatorname{im}\{\operatorname{can}\}}{\operatorname{im}\{\id-\widetilde T_f\}}\rightarrow 0,
$$

\medskip

\noindent where we view $\ker\{\operatorname{var}\}$ as a sub-perverse sheaf of $\ker\{\id-\widetilde T_f\}$ by the canonical injection $\ker\{\operatorname{var}\}\hookrightarrow\ker\{\operatorname{can}\circ\operatorname{var}\}=\ker\{\id-\widetilde T_f\}$, and
 we view $\im\{\id-\widetilde T_f\}$ as a sub-perverse sheaf of $\im\{\operatorname{can}\}$ by the canonical injection $\im\{\operatorname{\id-\widetilde T_f}\}=\im\{\operatorname{can}\circ\operatorname{var}\}\hookrightarrow\im\{\operatorname{can}\}$.
\end{thm}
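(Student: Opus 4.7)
The plan is to exploit the defining factorization $\omega_f = \operatorname{pmoc}\circ\operatorname{comp}$ at the level of perverse cohomology. Applying the additive functor ${}^{\mu}H^0$ gives
\[
{}^{\mu}H^0(\omega_f) \;=\; {}^{\mu}H^0(\operatorname{pmoc})\circ {}^{\mu}H^0(\operatorname{comp}),
\]
so ${}^{\mu}H^0(\omega_f)$ factors through the perverse sheaf $\psi_f[-1]\Pdot$. By \propref{prop:prewang}, ${}^{\mu}H^0(\operatorname{comp})$ identifies ${}^{\mu}H^0(j^*[-1]\Pdot)$ with the subobject $\ker\{\operatorname{can}\}\subseteq \psi_f[-1]\Pdot$, while ${}^{\mu}H^0(\operatorname{pmoc})$ is the surjection $\psi_f[-1]\Pdot\twoheadrightarrow {}^{\mu}H^0(j^![1]\Pdot)$ with kernel $\im\{\operatorname{var}\}$.

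From this factorization, elementary diagram chasing in the abelian category $\operatorname{Perv}(V(f))$ yields
\[
\ker\{{}^{\mu}H^0(\omega_f)\} \;\cong\; \ker\{\operatorname{can}\}\cap \im\{\operatorname{var}\},\qquad \coker\{{}^{\mu}H^0(\omega_f)\} \;\cong\; \psi_f[-1]\Pdot/(\ker\{\operatorname{can}\}+\im\{\operatorname{var}\}).
\]
The next step is to rewrite these in the form appearing in the statement. For the kernel, the morphism $\operatorname{var}\colon\phi_f[-1]\Pdot\to\psi_f[-1]\Pdot$ induces the canonical isomorphism $\phi_f[-1]\Pdot/\ker\{\operatorname{var}\}\xrightarrow{\sim}\im\{\operatorname{var}\}$, and an element $x\in\phi_f[-1]\Pdot$ satisfies $\operatorname{var}(x)\in\ker\{\operatorname{can}\}$ precisely when $(\operatorname{can}\circ\operatorname{var})(x)=0$, i.e. when $x\in\ker\{\id-\widetilde T_f\}$. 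Restriction therefore produces
\[
\ker\{\id-\widetilde T_f\}/\ker\{\operatorname{var}\}\;\xrightarrow{\sim}\;\ker\{\operatorname{can}\}\cap\im\{\operatorname{var}\}.
\]
For the cokernel, the canonical isomorphism $\psi_f[-1]\Pdot/\ker\{\operatorname{can}\}\xrightarrow{\sim}\im\{\operatorname{can}\}$ sends the image of $\im\{\operatorname{var}\}$ in the quotient to $\operatorname{can}(\im\{\operatorname{var}\})=\im\{\operatorname{can}\circ\operatorname{var}\}=\im\{\id-\widetilde T_f\}$, so
\[
\psi_f[-1]\Pdot/(\ker\{\operatorname{can}\}+\im\{\operatorname{var}\})\;\xrightarrow{\sim}\;\im\{\operatorname{can}\}/\im\{\id-\widetilde T_f\}.
\]

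Assembling these identifications produces the asserted four-term exact sequence, with the sub-perverse-sheaf inclusions $\ker\{\operatorname{var}\}\hookrightarrow\ker\{\id-\widetilde T_f\}$ and $\im\{\id-\widetilde T_f\}\hookrightarrow \im\{\operatorname{can}\}$ appearing in the proof being exactly the canonical ones singled out in the statement (since $\id-\widetilde T_f = \operatorname{can}\circ\operatorname{var}$). The whole argument is a routine diagram chase in $\operatorname{Perv}(V(f))$; no substantive obstacle arises, and the only point requiring care is keeping straight which map (the restriction of $\operatorname{var}$ on one side, the passage through $\operatorname{can}$ on the other) is being used to realize the kernel and cokernel of ${}^{\mu}H^0(\omega_f)$ as subquotients of perverse sheaves constructed from $\phi_f[-1]\Pdot$ alone.
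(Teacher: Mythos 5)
Your proof is correct and amounts to the same argument as the paper's, just presented from the opposite end: the paper writes down the standard six-term exact sequence
$$0\to\ker\{\operatorname{var}\}\to\ker\{\operatorname{can}\circ\operatorname{var}\}\to\ker\{\operatorname{can}\}\to\coker\{\operatorname{var}\}\to\coker\{\operatorname{can}\circ\operatorname{var}\}\to\coker\{\operatorname{can}\}\to 0$$
for the composite $\operatorname{can}\circ\operatorname{var}$ and then applies \propref{prop:prewang} to identify $\ker\{\operatorname{can}\}\cong{}^{\mu}\hskip -0.02in H^0(j^*[-1]\Pdot)$ and $\coker\{\operatorname{var}\}\cong{}^{\mu}\hskip -0.02in H^0(j^![1]\Pdot)$ with the middle map becoming ${}^{\mu}\hskip -0.02in H^0(\omega_f)$, whereas you start from the factorization ${}^{\mu}\hskip -0.02in H^0(\omega_f)={}^{\mu}\hskip -0.02in H^0(\operatorname{pmoc})\circ{}^{\mu}\hskip -0.02in H^0(\operatorname{comp})$ and compute the kernel and cokernel of that middle map directly, your two identifications $\ker\{\operatorname{can}\}\cap\im\{\operatorname{var}\}\cong\ker\{\id-\widetilde T_f\}/\ker\{\operatorname{var}\}$ and $\psi_f[-1]\Pdot/(\ker\{\operatorname{can}\}+\im\{\operatorname{var}\})\cong\im\{\operatorname{can}\}/\im\{\id-\widetilde T_f\}$ being precisely the content of exactness of the paper's six-term sequence at those two spots. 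The element-chasing phrasing is harmless in the abelian category $\operatorname{Perv}(V(f))$ (via Freyd--Mitchell or generalized elements), so there is no gap, and the induced inclusions you produce agree with the canonical ones singled out in the statement.
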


\begin{proof} One easily verifies that there is an exact sequence

\medskip

$0\rightarrow\ker\{\operatorname{var}\}\rightarrow \ker\{\operatorname{can}\circ\operatorname{var}\}\rightarrow \ker\{\operatorname{can}\}\rightarrow \hfill$

\medskip

$\hfill\coker\{\operatorname{var}\}\rightarrow  \coker\{\operatorname{can}\circ\operatorname{var}\}\rightarrow \coker\{\operatorname{can}\}\rightarrow 0,
$

\smallskip

\noindent where

\begin{itemize}
\item the second arrow from the left is the canonical injection, 
\medskip
\item the third arrow is induced by $\operatorname{var}$,
\medskip
\item the fourth arrow is the composition of the canonical injection of $\ker\{\operatorname{can}\}$ into $\psi_f[-1]\Pdot$ with the canonical surjection from $\psi_f[-1]\Pdot$ onto $\coker\{\operatorname{var}\}$,
\medskip
\item the fifth arrow is induced by $\operatorname{can}$, and
\medskip
\item the sixth arrow is the canonical surjection.
\end{itemize}

\bigskip

Now the exact sequence in the statement of the theorem follows immediately from  \propref{prop:prewang}.
\end{proof}

\medskip

\section{Splitting}

\medskip

We have the following easy result which tells us when the vanishing cycles are a direct summand of the nearby cycles.

\smallskip

\begin{thm}\label{thm:split} Let $\Pdot$ be a perverse sheaf on $X$. Then, the following are equivalent:

\begin{enumerate}
\item $\id-\widetilde T_f$ is an isomorphism;
\medskip
\item $\ker\{\id-\widetilde T_f\}=0$ and $\coker\{\id-\widetilde T_f\}=0$;
\medskip
\item ${}^{\mu}\hskip -0.02in H^{-1}(j^![1]\Pdot)=0$, ${}^{\mu}\hskip -0.02in H^{1}(j^*[-1]\Pdot)=0$, and ${}^{\mu}\hskip -0.02in H^0(\omega_f)$ is an isomorphism;
\medskip
\item $j^![1]\Pdot$ and $j^*[-1]\Pdot$ are perverse, and $\omega_f: j^*[-1]\Pdot\rightarrow j^![1]\Pdot$ is an isomorphism;
\medskip
\item $\omega_f: j^*[-1]\Pdot\rightarrow j^![1]\Pdot$ is an isomorphism, and $\Pdot$ is the intermediate extension of $i^*\Pdot=i^!\Pdot$ to $V(f)$.
\end{enumerate}

\medskip

Furthermore, when these equivalent conditions hold, the vanishing and dual vanishing triangles are short exact sequences in $\operatorname{Perv}(V(f))$ which split in a manner compatible with the monodromy automorphisms; thus we have isomorphisms
$$
\psi_f[-1]\Pdot\cong \phi_f[-1]\Pdot\oplus j^*[-1]\Pdot\cong \phi_f[-1]\Pdot\oplus j^![1]\Pdot
$$
and, via these isomorphisms, $T_f$ is identified with $(\widetilde T_f, \id)$ in each case.
\end{thm}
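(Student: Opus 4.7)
The plan is to prove the equivalences cyclically $(1)\Leftrightarrow(2)\Leftrightarrow(3)\Leftrightarrow(4)\Leftrightarrow(5)$ and then derive the splitting. Equivalence $(1)\Leftrightarrow(2)$ is immediate in the abelian category $\operatorname{Perv}(V(f))$. For $(2)\Leftrightarrow(3)$ I will combine \thmref{thm:wang} with \propref{prop:prewang}. Since $\ker\{\operatorname{var}\}$ is a subobject of $\ker\{\operatorname{can}\circ\operatorname{var}\}=\ker\{\id-\widetilde T_f\}$ and, dually, $\im\{\id-\widetilde T_f\}=\im\{\operatorname{can}\circ\operatorname{var}\}$ sits inside $\im\{\operatorname{can}\}$, condition (2) forces $\ker\{\operatorname{var}\}=0$ and $\coker\{\operatorname{can}\}=0$, which via \propref{prop:prewang} are precisely ${}^\mu H^{-1}(j^![1]\Pdot)=0$ and ${}^\mu H^{1}(j^*[-1]\Pdot)=0$; the outer terms of the four-term exact sequence of \thmref{thm:wang} then collapse, so ${}^\mu H^0(\omega_f)$ is an isomorphism. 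Conversely, given (3), the same sequence yields $\ker\{\id-\widetilde T_f\}=\ker\{\operatorname{var}\}=0$, and since ${}^\mu H^{1}(j^*[-1]\Pdot)=\coker\{\operatorname{can}\}=0$ gives $\im\{\operatorname{can}\}=\phi_f[-1]\Pdot$, surjectivity of ${}^\mu H^0(\omega_f)$ forces $\im\{\id-\widetilde T_f\}=\phi_f[-1]\Pdot$, i.e., $\coker\{\id-\widetilde T_f\}=0$.

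Equivalence $(3)\Leftrightarrow(4)$ is a direct reading of \propref{prop:prewang}: the possibly nonzero perverse cohomologies of $j^![1]\Pdot$ and $j^*[-1]\Pdot$ are confined to degrees $\{-1,0\}$ and $\{0,1\}$, so each complex is perverse exactly when the relevant outer cohomology vanishes, and under these vanishings ${}^\mu H^0(\omega_f)$ is just $\omega_f$. For $(4)\Leftrightarrow(5)$ I will invoke the standard criterion that a perverse sheaf $\Pdot$ on $X$ equals $i_{!*}(i^*\Pdot)$ precisely when $j^*\Pdot\in{}^\mu\mathbf D^{\leqslant -1}(V(f))$ and $j^!\Pdot\in{}^\mu\mathbf D^{\geqslant 1}(V(f))$; after the shifts by $\pm 1$ these conditions become ${}^\mu H^1(j^*[-1]\Pdot)=0$ and ${}^\mu H^{-1}(j^![1]\Pdot)=0$, which is perversity of $j^*[-1]\Pdot$ and $j^![1]\Pdot$. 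The equality $i^*\Pdot=i^!\Pdot$ is automatic since $i$ is an open immersion.

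For the splitting, the key observation is that under the equivalent conditions $\id-\widetilde T_f$ is an automorphism of $\phi_f[-1]\Pdot$, so the factorization $\operatorname{can}\circ\operatorname{var}=\id-\widetilde T_f$ yields a right inverse $s:=\operatorname{var}\circ(\id-\widetilde T_f)^{-1}$ to $\operatorname{can}$ and a left inverse $r:=(\id-\widetilde T_f)^{-1}\circ\operatorname{can}$ to $\operatorname{var}$. By (4), both the vanishing and dual vanishing triangles are short exact sequences in $\operatorname{Perv}(V(f))$, and the existence of $s$ and $r$ splits them, producing the two direct sum decompositions. For monodromy compatibility, the automorphism triples $(\id,T_f,\widetilde T_f)$ and $(\widetilde T_f,T_f,\id)$ give $\operatorname{can}\circ T_f=\widetilde T_f\circ\operatorname{can}$ and $T_f\circ\operatorname{var}=\operatorname{var}\circ\widetilde T_f$; since $\widetilde T_f$ commutes with $(\id-\widetilde T_f)^{-1}$, these force $T_f\circ s=s\circ\widetilde T_f$ and $r\circ T_f=\widetilde T_f\circ r$, so under each decomposition $T_f$ is identified with $(\widetilde T_f,\id)$.

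The main technical care lies in matching the intermediate extension criterion to the shifted cohomological vanishings in $(4)\Leftrightarrow(5)$ and in tracking which of $s,r$ is a section versus a retraction; once that is set up correctly, the monodromy compatibility of the splitting reduces to the single observation that $\widetilde T_f$ commutes with $(\id-\widetilde T_f)^{-1}$.
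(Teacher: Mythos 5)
Your proposal is correct and follows essentially the same route as the paper's: $(1)\Leftrightarrow(2)\Leftrightarrow(3)$ via \propref{prop:prewang} and \thmref{thm:wang}, $(3)\Leftrightarrow(4)$ by reading off the perverse cohomological amplitude, $(4)\Leftrightarrow(5)$ via the standard intermediate-extension criterion, and the splitting via the maps $\operatorname{var}\circ(\id-\widetilde T_f)^{-1}$ and $(\id-\widetilde T_f)^{-1}\circ\operatorname{can}$. Your only addition is the explicit verification of monodromy compatibility of the splitting (using that $\widetilde T_f$ commutes with $(\id-\widetilde T_f)^{-1}$), a step the paper asserts without writing out the computation.
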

\begin{proof} Given \propref{prop:prewang} and \thmref{thm:wang}, the equivalences of (1), (2), and (3) are utterly trivial. Clearly (4) implies (3), and (3)  together with  \thmref{thm:wang} implies (4), since having zero perverse cohomology outside of degree 0 is equivalent to being perverse (see 1.3.7 of \cite{bbd} or Proposition 5.1.7 of \cite{dimcasheaves}).

We need to show the equivalence of (4) and (5). Recall the standard results (i) and (ii) from \secref{sec:basic}, which tell us that $j^*[-1]$ sends ${}^{{}^\mu}\bold D^{{}^{\geqslant 0}}(X)$ to ${}^{{}^\mu}\bold D^{{}^{\geqslant 0}}(V(f))$ and  $j^![1]$ sends ${}^{{}^\mu}\bold D^{{}^{\leqslant 0}}(X)$ to ${}^{{}^\mu}\bold D^{{}^{\leqslant 0}}(V(f))$; therefore, $j^*[-1]\Pdot\in {}^{{}^\mu}\bold D^{{}^{\geqslant 0}}(V(f))$ and $j^![1]\Pdot\in {}^{{}^\mu}\bold D^{{}^{\leqslant 0}}(V(f))$. But one of the equivalent definitions/characterizations of $\Pdot$ being the intermediate extension of $i^*\Pdot=i^!\Pdot$ is that $j^*[-1]\Pdot\in {}^{{}^\mu}\bold D^{{}^{\leqslant 0}}(V(f))$ and $j^![1]\Pdot\in {}^{{}^\mu}\bold D^{{}^{\geqslant 0}}(V(f))$ (see, for instance, \cite{dimcasheaves} Definition 5.2.6). Thus (4) and (5) are equivalent.

We need to show that (1)-(5) imply the splittings exist. Assuming (1)-(5), we have short exact sequences in $\operatorname{Perv}(X)$:
$$
0\rightarrow j^*[-1]\Pdot\xrightarrow{\operatorname{comp}}\psi_f[-1]\Pdot\xrightarrow{\operatorname{can}}\phi_f[-1]\Pdot\rightarrow 0
$$
and
$$
0\rightarrow \phi_f[-1]\Pdot\xrightarrow{\operatorname{var}}\psi_f[-1]\Pdot\xrightarrow{\operatorname{pmoc}} j^![1]\Pdot\rightarrow 0,
$$

\medskip

\noindent where $\operatorname{can}\circ\operatorname{var}=\operatorname{id}-\widetilde T_f$.

Consider $\operatorname{var}\circ (\operatorname{id}-\widetilde T_f)^{-1}$ from $\phi_f[-1]\Pdot$ to $\psi_f[-1]\Pdot$, and  $(\operatorname{id}-\widetilde T_f)^{-1}\circ\operatorname{can}$ from $\psi_f[-1]\Pdot$ to $\phi_f[-1]\Pdot$. Then $\operatorname{can}\circ[\operatorname{var}\circ (\operatorname{id}-\widetilde T_f)^{-1}]=\operatorname{id}$ and $[(\operatorname{id}-\widetilde T_f)^{-1}\circ\operatorname{can}]\circ \operatorname{var}=\operatorname{id}$. The first equality shows that the first short exact sequence splits, and the second equality shows that the second short exact sequence splits.
\end{proof}

\begin{rem} Throughout this paper, we use $\Z$ as our base ring because we care about torsion. However, we could use any base ring, $R$, which is a commutative, regular, Noetherian ring, with finite Krull dimension (e.g., $\Z$, $\Q$, or $\C$). In particular, if we use a base ring which is a field in \thmref{thm:split}, then $\ker\{\id-\widetilde T_f\}=0$ if an only if $\coker\{\id-\widetilde T_f\}=0$; so, in the field case, $\ker\{\id-\widetilde T_f\}=0$ if and only if $\id-\widetilde T_f$ is an isomorphism.
\end{rem}

\medskip

\section{The intermediate extension to $V(f)$}

In this section, we isolate the properties that allowed us to prove ($\dagger$) and ($\ddagger$) from the introduction; this allows us to obtain analogous results in a much more general setting.

\smallskip

We continue with $X$ $f$, $j$ and $i$ as before, and assume throughout the remainder of the paper that $\Pdot$ is a perverse sheaf on $X$. We let $\Sigma_f:= \operatorname{supp}\phi_f[-1]\Pdot$, $m:\Sigma_f\hookrightarrow V(f)$ denote the (closed) inclusion and let $\ell:V(f)\backslash\Sigma_f\hookrightarrow V(f)$ denote the (open) inclusion. Finally, we define the (closed) inclusion $\hat m:=j\circ m:\Sigma_f\rightarrow X$. 
\bigskip

First, we have the easy:

\medskip

\begin{prop}\label{prop:pred} The following are equivalent:
\begin{enumerate}

\item ${}^{\mu}\hskip -0.02in H^{-1}(\hat m^*\Pdot)={}^{\mu}\hskip -0.02in H^{0}(\hat m^*\Pdot)=
{}^{\mu}\hskip -0.02in H^{0}(\hat m^!\Pdot)={}^{\mu}\hskip -0.02in H^{1}(\hat m^!\Pdot)=0$;
\medskip
\item $\hat m^*\Pdot\in {}^{{}^\mu}\mathbf D^{{}^{\leqslant {-2}}}(\Sigma_f)$ and $\hat m^!\Pdot\in {}^{{}^\mu}\mathbf D^{{}^{\geqslant {2}}}(\Sigma_f)$;
\medskip
\item $\hat m^*[-2]\Pdot\in {}^{{}^\mu}\mathbf D^{{}^{\leqslant {0}}}(\Sigma_f)$ and $\hat m^![2]\Pdot\in {}^{{}^\mu}\mathbf D^{{}^{\geqslant {0}}}(\Sigma_f)$;
\medskip
\item for all integers $k$,
$$
\dim\supp^{-k}(\hat m^*[-2]\Pdot)=\dim \overline{\big\{x\in\Sigma_f\, |\, H^{-k-2}(\Pdot)_x\neq 0\big\}}\leq k
$$
and
$$
\dim\cosupp^{k}(\hat m^![2]\Pdot)=\dim \overline{\big\{x\in\Sigma_f\, |\, \hyp^{k+2}(B^\circ_\epsilon(x)\cap X,\, B^\circ_\epsilon(x)\cap X\backslash\{x\};\, \Pdot)\neq 0\big\}}\leq k
$$

\medskip

\noindent where $B^\circ_\epsilon(x)$ denotes a open ball of (small) radius $\epsilon>0$, centered at $x$ and, by convention, the empty set has dimension $-\infty$
\end{enumerate}
\end{prop}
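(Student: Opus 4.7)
The plan is a simple cycle $(1)\Leftrightarrow(2)\Leftrightarrow(3)\Leftrightarrow(4)$, with each link essentially formal; the proposition is packaging, not substance, and the only calculation that needs care is the cosupport identification in (4).

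For $(1)\Leftrightarrow(2)$, I would invoke the general $t$-exactness bounds recalled in \secref{sec:basic}. Because $\hat m:\Sigma_f\hookrightarrow X$ is a closed embedding, it has fiber dimension $d=0$, so fact 1 of that list gives $\hat m^*\Pdot\in {}^{{}^\mu}\mathbf D^{{}^{\leqslant 0}}(\Sigma_f)$ and fact 2 gives $\hat m^!\Pdot\in {}^{{}^\mu}\mathbf D^{{}^{\geqslant 0}}(\Sigma_f)$. A priori, then, ${}^{\mu}\hskip -0.02in H^k(\hat m^*\Pdot)$ is forced to vanish for $k>0$, and ${}^{\mu}\hskip -0.02in H^k(\hat m^!\Pdot)$ is forced to vanish for $k<0$. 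The four additional vanishings demanded by (1) kill exactly the remaining degrees $k\in\{-1,0\}$ on the $\hat m^*$-side and $k\in\{0,1\}$ on the $\hat m^!$-side, which compresses the ranges into $k\leqslant -2$ and $k\geqslant 2$ respectively; this is precisely (2). The step $(2)\Leftrightarrow(3)$ is the purely formal shift identity $\Adot\in{}^{{}^\mu}\mathbf D^{{}^{\leqslant n}}\Leftrightarrow\Adot[-n]\in{}^{{}^\mu}\mathbf D^{{}^{\leqslant 0}}$ (and dually for $\geqslant$).

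For $(3)\Leftrightarrow(4)$ I would apply the standard dimensional characterization of the perverse $t$-structure: for any constructible complex $\Bdot$ on a complex analytic space $Y$, $\Bdot\in {}^{{}^\mu}\mathbf D^{{}^{\leqslant 0}}(Y)$ iff $\dim\overline{\{y\in Y\,|\, H^{-k}(\Bdot)_y\neq 0\}}\leqslant k$ for every $k$, and $\Bdot\in {}^{{}^\mu}\mathbf D^{{}^{\geqslant 0}}(Y)$ iff the analogous bound holds with stalks replaced by local (pair) hypercohomology. For the support side, applied to $\Bdot=\hat m^*[-2]\Pdot$, the stalk identity $H^{-k}(\hat m^*[-2]\Pdot)_x=H^{-k-2}(\hat m^*\Pdot)_x=H^{-k-2}(\Pdot)_x$ for $x\in\Sigma_f$ immediately reproduces the first formula in (4). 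For the cosupport side, applied to $\Bdot=\hat m^![2]\Pdot$, I would use the base-change identity $(i_x^{\Sigma_f})^!\hat m^!=(i_x^X)^!$, where $i_x^{\Sigma_f}:\{x\}\hookrightarrow\Sigma_f$ and $i_x^X:\{x\}\hookrightarrow X$, together with the standard identification $\hyp^k((i_x^X)^!\Pdot)\cong\hyp^k(B^\circ_\epsilon(x)\cap X,\, B^\circ_\epsilon(x)\cap X\backslash\{x\};\Pdot)$. The $[2]$-shift on $\hat m^!\Pdot$ absorbs into the $k+2$ appearing in (4).

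The only place where bookkeeping care is required is this last cosupport identification: the topological data in (4) (the ball and the pair of spaces) is taken inside $X$, not inside $\Sigma_f$, so the base-change above is what bridges the formula on the $\Sigma_f$-side with the local cohomology of $\Pdot$ on $X$. Beyond that one sleight of hand, every implication is either a shift or a direct application of standard $t$-structure machinery, so no deeper ideas are needed.
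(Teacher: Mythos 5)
Your proposal is correct and follows essentially the same route as the paper: both use the $d=0$ case of the $t$-exactness facts from Section~2 to establish $\hat m^*\Pdot\in {}^{{}^\mu}\mathbf D^{{}^{\leqslant 0}}(\Sigma_f)$ and $\hat m^!\Pdot\in {}^{{}^\mu}\mathbf D^{{}^{\geqslant 0}}(\Sigma_f)$ and then read off $(1)\Leftrightarrow(2)\Leftrightarrow(3)$, and both unpack $(3)\Leftrightarrow(4)$ via point inclusions and the identifications of stalks and local pair hypercohomology. (One small terminological remark: the identity $w_x^!\hat m^! = \check w_x^!$ you invoke for the cosupport side is just functoriality of $(\,\cdot\,)^!$ under composition, not base change, but the computation is the same.)
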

\begin{proof} Since $\hat m$ is inclusion, $\hat m^*$ sends ${}^{{}^\mu}\mathbf D^{{}^{\leqslant {0}}}(X)$ to ${}^{{}^\mu}\mathbf D^{{}^{\leqslant {0}}}(\Sigma_f)$, and $\hat m^!$ sends  ${}^{{}^\mu}\mathbf D^{{}^{\geqslant {0}}}(X)$ to ${}^{{}^\mu}\mathbf D^{{}^{\geqslant {0}}}(\Sigma_f)$. Thus Items (1) and (2) are equivalent, and clearly Item (3) is equivalent to Item (2).

Item (4) is nothing more than a direct translation of the conditions (the support and cosupport conditions) in Item (3). To see this, for all $x\in\Sigma_f$, let $w_x:\{x\}\hookrightarrow \Sigma f$ so that $\check w_x:=\hat m\circ w_x$ is the inclusion of $\{x\}$ into $X$. Then use that $H^{-k-2}(\check w_x^*\Pdot)\cong H^{-k-2}(\Pdot)_x$ and $H^{k+2}(\check w_x^!\Pdot)\cong \hyp^{k+2}(B^\circ_\epsilon(x)\cap X,\, B^\circ_\epsilon(x)\cap X\backslash\{x\};\, \Pdot)$.
\end{proof}

\medskip

\begin{rem} The conditions $\hat m^*\Pdot\in {}^{{}^\mu}\mathbf D^{{}^{\leqslant {-2}}}(\Sigma_f)$ and $\hat m^!\Pdot\in {}^{{}^\mu}\mathbf D^{{}^{\geqslant {2}}}(\Sigma_f)$ are dual to each other, provided that we use a field for our base ring. To be precise, if our base ring is a field, and $\Pdot$ is self-dual (i.e., $\vdual\Pdot\cong\Pdot$), then 
$\hat m^*\Pdot\in {}^{{}^\mu}\mathbf D^{{}^{\leqslant {-2}}}(\Sigma_f)$ if and only if $\hat m^!\Pdot\in {}^{{}^\mu}\mathbf D^{{}^{\geqslant {2}}}(\Sigma_f)$.
\end{rem}

\bigskip

\begin{defn}\label{def:pred} When the equivalent conditions of \propref{prop:pred} are satisfied, we say that $f$ is {\bf $\Pdot$-reduced}.
\end{defn}

\smallskip

The reason for this terminology wll become clear in the next section.

\bigskip

In order to prove a generalization of ($\dagger$) and ($\ddagger$), we need something that replaces intersection cohomology of $V(f)$ with constant coefficients. That ``something'' is:

\begin{defn}\label{defn:intermediate}
 Let $\Idot_{V(f)}$ be the intermediate extension from $V(f)\backslash\Sigma_f$ to $V(f)$ of the perverse sheaf
$$\ell^*j^*[-1]\Pdot\cong \ell^*\psi_f[-1]\Pdot\cong\ell^*j^![1]\Pdot\cong \ell^!j^![1]\Pdot\cong\ell^!\psi_f[-1]\Pdot\cong\ell^!j^*[-1]\Pdot.
$$
(Note that these isomorphisms follow at once from the vanishing and dual vanishing triangles, since we are restricting to the complement on the support of the vanishing cycles.)
\end{defn}

\medskip

Now we have:

\medskip

\begin{thm}\label{thm:mainshort} Suppose that $f$ is $\Pdot$-reduced.

\medskip

Then, 

\begin{enumerate}

\item $j^*[-1]\Pdot$ and $j^![1]\Pdot$ are perverse, 

\medskip

\item $\Pdot$ is isomorphic to the intermediate extension of $i^*\Pdot=i^!\Pdot$ from $X\backslash V(f)$ to $X$, 

\medskip

\item $V(f)$ contains no irreducible component of $\supp \Pdot$, 

\medskip

\item $\Sigma_f$ contains no irreducible component of $\supp j^*[-1]\Pdot$ or $\supp j^![1]\Pdot$, and

\medskip

\item there are short exact sequences in $\operatorname{Perv}(V(f))$:
$$
0\rightarrow \ker\{\operatorname{id}-\widetilde T_f\}\rightarrow j^*[-1]\Pdot\rightarrow \Idot_{V(f)}\rightarrow 0
$$
and
$$
0\rightarrow \Idot_{V(f)} \rightarrow j^![1]\Pdot\rightarrow \coker\{\operatorname{id}-\widetilde T_f\}\rightarrow  0
$$
\end{enumerate}
\end{thm}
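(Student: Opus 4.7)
The plan is to translate the $\Pdot$-reduced hypothesis into conditions on $j^*[-1]\Pdot$ and $j^![1]\Pdot$, then derive the five items in order, with (5) being the main work. Since $\hat m = j\circ m$ and $j^*\Pdot = (j^*[-1]\Pdot)[1]$, $j^!\Pdot = (j^![1]\Pdot)[-1]$, the hypotheses $\hat m^*\Pdot\in {}^{{}^\mu}\mathbf D^{{}^{\leqslant -2}}(\Sigma_f)$ and $\hat m^!\Pdot\in {}^{{}^\mu}\mathbf D^{{}^{\geqslant 2}}(\Sigma_f)$ translate to $m^*(j^*[-1]\Pdot)\in {}^{{}^\mu}\mathbf D^{{}^{\leqslant -1}}(\Sigma_f)$ and $m^!(j^![1]\Pdot)\in {}^{{}^\mu}\mathbf D^{{}^{\geqslant 1}}(\Sigma_f)$.

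For (1), apply $m^*$ to the truncation triangle ${}^{\mu}\hskip -0.02in H^0(j^*[-1]\Pdot)\to j^*[-1]\Pdot\to {}^{\mu}\hskip -0.02in H^1(j^*[-1]\Pdot)[-1]\to [1]$ provided by \propref{prop:prewang}, and take the long exact sequence of perverse cohomology. Using the hypothesis $m^*(j^*[-1]\Pdot)\in {}^{{}^\mu}\mathbf D^{{}^{\leqslant -1}}$ and the right $t$-exactness of $m^*$ applied to ${}^{\mu}\hskip -0.02in H^0(j^*[-1]\Pdot)$, the degree-$1$ portion of the LES forces $m^*\,{}^{\mu}\hskip -0.02in H^1(j^*[-1]\Pdot)=0$. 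Since ${}^{\mu}\hskip -0.02in H^1(j^*[-1]\Pdot)\cong\coker\{\operatorname{can}\}$ is a perverse sheaf supported on $\Sigma_f$, it is recovered from its pullback via the closed embedding $m$ and hence vanishes; the dual argument using $m^!$ gives perversity of $j^![1]\Pdot$. Items (2) and (3) then follow immediately: the perversity of $j^*[-1]\Pdot$ and $j^![1]\Pdot$ is precisely the characterization of intermediate extension recalled in the proof of \thmref{thm:split}, and the support of any intermediate extension equals the closure of its support on the open stratum, forcing every component of $\supp\Pdot$ to meet $X\backslash V(f)$.

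For (4), let $C$ be an irreducible component of $\supp j^*[-1]\Pdot = V(f)\cap\supp\Pdot$, and choose an irreducible component $D$ of $\supp\Pdot$ of maximal dimension $d$ with $C\subseteq D$. By (3), $D\not\subseteq V(f)$; since $V(f)$ is cut out locally by one equation and $D$ is irreducible, $D\cap V(f)$ is pure of dimension $d-1$, which forces $\dim C = d-1$. Because $\Pdot$ is the intermediate extension from $X\backslash V(f)$ and $D$ is an irreducible component of $\supp\Pdot$, on $D^{\mathrm{reg}}$ the sheaf $H^{-d}(\Pdot)$ is a nonzero local system, so $D\subseteq\supp H^{-d}(\Pdot)$, and in particular $C\subseteq\supp H^{-d}(\Pdot)$. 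If one had $C\subseteq\Sigma_f$, the bound in \propref{prop:pred}(4) applied with $k=d-2$ would give $\dim C\leqslant d-2$, contradicting $\dim C = d-1$. The argument for $j^![1]\Pdot$ is dual, using the cosupport bound of \propref{prop:pred}(4).

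For (5), since $j^*[-1]\Pdot$ and $j^![1]\Pdot$ are perverse, the vanishing and dual vanishing triangles become short exact sequences of perverse sheaves. Inside $\psi_f[-1]\Pdot$, the identity $\operatorname{id}-\widetilde T_f = \operatorname{can}\circ\operatorname{var}$ and injectivity of $\operatorname{var}$ yield $\operatorname{comp}(j^*[-1]\Pdot)\cap\operatorname{var}(\phi_f[-1]\Pdot) = \operatorname{var}(\ker\{\operatorname{id}-\widetilde T_f\})$, producing a canonical monomorphism $\ker\{\operatorname{id}-\widetilde T_f\}\hookrightarrow j^*[-1]\Pdot$. I claim this is the maximal subobject of $j^*[-1]\Pdot$ supported on $\Sigma_f$: any such subobject $S$, viewed inside $\psi_f[-1]\Pdot$ and projected via $\operatorname{pmoc}$, gives a subobject of $j^![1]\Pdot$ supported on $\Sigma_f$, which must vanish because $m^!(j^![1]\Pdot)\in {}^{{}^\mu}\mathbf D^{{}^{\geqslant 1}}$; hence $S\subseteq\ker\{\operatorname{pmoc}\}=\operatorname{var}(\phi_f[-1]\Pdot)$, and so $S\subseteq\ker\{\operatorname{id}-\widetilde T_f\}$. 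The quotient $j^*[-1]\Pdot/\ker\{\operatorname{id}-\widetilde T_f\}$ then has no subobject supported on $\Sigma_f$ by maximality, inherits ``no quotient supported on $\Sigma_f$'' from $j^*[-1]\Pdot$ via $m^*(j^*[-1]\Pdot)\in {}^{{}^\mu}\mathbf D^{{}^{\leqslant -1}}$, and agrees with $\Idot_{V(f)}$ on $V(f)\backslash\Sigma_f$; by uniqueness of the intermediate extension, it equals $\Idot_{V(f)}$. The second short exact sequence follows by the dual argument, identifying $\coker\{\operatorname{id}-\widetilde T_f\}$ as the maximal quotient of $j^![1]\Pdot$ supported on $\Sigma_f$. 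The main obstacle is organizing this so that the one-sided conditions provided by $\Pdot$-reduced---no quotient on $\Sigma_f$ for $j^*[-1]\Pdot$, no subobject on $\Sigma_f$ for $j^![1]\Pdot$---are each used for the correct half of the appropriate short exact sequence; the duality between $j^*$, $j^!$ and between $\operatorname{comp}$, $\operatorname{pmoc}$ is what makes the symmetric argument for the second sequence go through.
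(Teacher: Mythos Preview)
Your arguments for (1), (2), (3), and (5) are correct and, for (1) and (5), take a somewhat different route from the paper. For (1), the paper uses the open--closed triangle $\ell_!\ell^! \to \id \to m_*m^*$ applied to $j^*[-1]\Pdot$, while you use the perverse truncation triangle; both reach ${}^{\mu}\hskip -0.02in H^{1}(j^*[-1]\Pdot)=0$ via ${}^{\mu}\hskip -0.02in H^{0}(\hat m^*\Pdot)=0$. For (5), the paper factors the Wang map as $j^*[-1]\Pdot\twoheadrightarrow \Jdot\hookrightarrow j^![1]\Pdot$ with $\Jdot=\im\{\omega_f\}$ and then checks $m^*\Jdot\in{}^{{}^\mu}\mathbf D^{{}^{\leqslant -1}}$, $m^!\Jdot\in{}^{{}^\mu}\mathbf D^{{}^{\geqslant 1}}$ via the two short exact sequences; you instead show directly that $\ker\{\id-\widetilde T_f\}$ is the maximal $\Sigma_f$-supported subobject of $j^*[-1]\Pdot$ and invoke the ``no sub, no quotient'' characterization of the intermediate extension. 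Your route makes the role of the one-sided hypotheses (no subobject in $j^![1]\Pdot$, no quotient in $j^*[-1]\Pdot$) very transparent; the paper's route stays closer to the functorial Wang morphism and avoids intersecting subobjects inside $\psi_f[-1]\Pdot$.

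There is, however, a genuine gap in your proof of (4). From ``$H^{-d}(\Pdot)$ is a nonzero local system on $D^{\mathrm{reg}}$'' you conclude $C\subseteq\supp H^{-d}(\Pdot)$ and then apply the bound in \propref{prop:pred}(4). But that bound controls $\dim\overline{\{x\in\Sigma_f : H^{-d}(\Pdot)_x\neq 0\}}$, and the inclusion $C\subseteq\overline{\{x : H^{-d}(\Pdot)_x\neq 0\}}$ together with $C\subseteq\Sigma_f$ does \emph{not} give $C\subseteq\overline{\{x\in\Sigma_f : H^{-d}(\Pdot)_x\neq 0\}}$: the component $C$ could lie entirely in the boundary $\overline{\{H^{-d}(\Pdot)_x\neq 0\}}\setminus\{H^{-d}(\Pdot)_x\neq 0\}$, i.e., in a lower stratum of $D$ where the stalk of $H^{-d}(\Pdot)$ vanishes. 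The detour through a component $D$ of $\supp\Pdot$ is unnecessary; the paper instead argues directly with the perverse sheaf $j^*[-1]\Pdot$: if $C\subseteq\Sigma_f$ is a component of its support, then on a dense open of $C$ the restriction $m^*j^*[-1]\Pdot$ is perverse and nonzero, contradicting $m^*(j^*[-1]\Pdot)\in{}^{{}^\mu}\mathbf D^{{}^{\leqslant -1}}(\Sigma_f)$ (equivalently ${}^{\mu}\hskip -0.02in H^{-1}(\hat m^*\Pdot)=0$). Replacing your paragraph for (4) with this one-line argument repairs the proof.
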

\begin{proof} 

\phantom{filler}

\medskip

\noindent Proof of (1): Recall that we showed in \thmref{thm:wang} that ${}^{\mu}\hskip -0.02in H^k(j^![1]\Pdot)$ is possibly non-zero only for $k=-1, 0$, and ${}^{\mu}\hskip -0.02in H^k(j^*[-1]\Pdot)$ is possibly non-zero only for $k=0, 1$. Thus, to show (1), we need to show that ${}^{\mu}\hskip -0.02in H^{-1}(j^![1]\Pdot)=0$ and ${}^{\mu}\hskip -0.02in H^{1}(j^*[-1]\Pdot)=0$. We will show that ${}^{\mu}\hskip -0.02in H^{1}(j^*[-1]\Pdot)=0$ and leave the dual argument for ${}^{\mu}\hskip -0.02in H^{-1}(j^![1]\Pdot)$ to the reader. 

Note that $\ell^!j^*[-1]\Pdot$ is perverse and so $\ell_!\ell^!j^*[-1]\Pdot$ satisfies the support condition, i.e.,  ${}^{\mu}\hskip -0.02in H^k(\ell_!\ell^!j^*[-1]\Pdot)=0$ for $k\geq 1$. Also note that our hypothesis that ${}^{\mu}\hskip -0.02in H^{0}(\hat m^*\Pdot)=0$ implies that
$${}^{\mu}\hskip -0.02in H^{1}(m_*m^*j^*[-1]\Pdot)={}^{\mu}\hskip -0.02in H^{0}(m_*\hat m^*\Pdot)=0.$$
Now apply perverse cohomology to the distinguished triangle
$$
\rightarrow \ell_!\ell^!j^*[-1]\Pdot\rightarrow j^*[-1]\Pdot\rightarrow m_*m^*j^*[-1]\Pdot\xrightarrow{[1]}
$$
to reach the desired conclusion.

\bigskip

\noindent Proof of (2): That $\Pdot$ is isomorphic to the intermediate extension of $i^*\Pdot=i^!\Pdot$ from $X\backslash V(f)$ to $X$ follows immediately from (1), since one of the equivalent characterizations of the intermediate extension is that $j^*[-1]\Pdot\in {}^{{}^\mu}\mathbf D^{{}^{\leqslant {0}}}(V(f))$ and  $j^![1]\Pdot\in {}^{{}^\mu}\mathbf D^{{}^{\geqslant {0}}}(V(f))$. See, for instance, \cite{dimcasheaves}, Definition 5.2.6.

\bigskip

\noindent Proof of (3): This follows immediately from (2).

\bigskip

\noindent Proof of (4): Suppose that $\Sigma_f$ contains an irreducible component $C$ of $\supp j^*[-1]\Pdot$ or $\supp j^![1]\Pdot$, where we know that $\supp j^*[-1]\Pdot$ and $\supp j^![1]\Pdot$ are perverse by (1). Then, restricting to an open dense subset of $C$ either $m^*j^*[-1]\Pdot$ or $m^!j^![1]\Pdot$ would be perverse and non-zero. But this would contradict either ${}^{\mu}\hskip -0.02in H^{-1}(\hat m^*\Pdot)=0$ or ${}^{\mu}\hskip -0.02in H^{1}(\hat m^!\Pdot)=0$.

\bigskip

\noindent Proof of (5): By (1), \propref{prop:prewang}, and \thmref{thm:wang}, we have an exact sequence in $\operatorname{Perv}(V(f))$:

$$0\rightarrow \ker\{\id-\widetilde T_f\}\rightarrow j^*[-1]\Pdot\xlongrightarrow{ \omega_f} j^![1]\Pdot\rightarrow  \coker\{\id-\widetilde T_f\}\rightarrow 0.$$
Let $\Jdot:=\im\{\omega_f\}$. Then we have two short exact sequences in $\operatorname{Perv}(V(f))$:

$$0\rightarrow \ker\{\id-\widetilde T_f\}\rightarrow j^*[-1]\Pdot\rightarrow\Jdot\rightarrow 0.$$
and
$$0\rightarrow \Jdot\rightarrow j^![1]\Pdot\rightarrow  \coker\{\id-\widetilde T_f\}\rightarrow 0.$$

\medskip

\noindent We claim that $\Jdot\cong \Idot_{V(f)}$. 

\medskip

First, by applying $\ell^*=\ell^!$ to the short exact sequences, we see that $\Jdot$ is an extension of $\ell^*j^*[-1]\Pdot\cong \ell^!j^![1]\Pdot$. We need to show that $m^*\Jdot\in {}^{{}^\mu}\mathbf D^{{}^{\leqslant {-1}}}(\Sigma_f)$ and $m^!\Jdot\in {}^{{}^\mu}\mathbf D^{{}^{\geqslant {1}}}(\Sigma_f)$, i.e., that ${}^{\mu}\hskip -0.02in H^{0}(\hat m^*\Jdot)=0$ and ${}^{\mu}\hskip -0.02in H^{0}(\hat m^!\Jdot)=0$.

Apply $m^*$ to the first short exact sequence/distinguished triangle above, apply $m!$ to the second short exact sequence/distinguished triangle, and take the long exact sequences on perverse cohomology. We obtain exact sequences
$$
\rightarrow  {}^{\mu}\hskip -0.02in H^{0}(m^* j^*[-1]\Pdot)\rightarrow{}^{\mu}\hskip -0.02in H^{0}(m^*\Jdot)\rightarrow {}^{\mu}\hskip -0.02in H^{1}(m^*\ker\{\id-\widetilde T_f\})\rightarrow
$$
and
$$
\rightarrow {}^{\mu}\hskip -0.02in H^{-1}(m^!\coker\{\id-\widetilde T_f\})\rightarrow {}^{\mu}\hskip -0.02in H^{0}(m^!\Jdot)\rightarrow {}^{\mu}\hskip -0.02in H^{0}(m^! j^![1]\Pdot)\rightarrow.
$$

\medskip

\noindent Now ${}^{\mu}\hskip -0.02in H^{1}(m^*\ker\{\id-\widetilde T_f\})=0$ and ${}^{\mu}\hskip -0.02in H^{-1}(m^!\coker\{\id-\widetilde T_f\})=0$ because $m^*\ker\{\id-\widetilde T_f\}$ and $m^!\coker\{\id-\widetilde T_f\}$ are perverse since they are restrictions/upper-shrieks of perverse sheaves to sets containing the supports of the initial perverse sheaves. Furthermore, 
${}^{\mu}\hskip -0.02in H^{0}(m^* j^*[-1]\Pdot)={}^{\mu}\hskip -0.02in H^{-1}(\hat m^*\Pdot)=0$ and ${}^{\mu}\hskip -0.02in H^{0}(m^! j^![1]\Pdot)= {}^{\mu}\hskip -0.02in H^{1}(\hat m^!\Pdot)=0$ by hypothesis. And so, we are finished.
\end{proof}

\medskip

\section{The constant sheaf on affine space}

Let us consider the classical case from the introduction, where we replace $X$ with $\U$, a non-empty open subset of $\C^{n+1}$, and replace $\Pdot$ with $\Z^\bullet_\U[n+1]$.

\bigskip

We begin with an easy, but fundamental, lemma. This lemma is well-known, but the proof is short, so we include it.

\begin{lem}\label{lem:fund} Let $Y$ be a closed complex analytic subspace of $\U$, which we do not assume is pure-dimensional. Denote by $c$ the codimension of $Y$ in $\U$, i.e., let $c:=n+1-\dim Y$. Let $r: Y\hookrightarrow\U$ be the inclusion.

Then, 
$$r^*[-c]\Z^\bullet_\U[n+1]\in {}^{{}^\mu}\mathbf D^{{}^{\leqslant {0}}}(Y)\hskip 0.2in\textnormal{ and }\hskip 0.2in r^![c]\Z^\bullet_\U[n+1]\in {}^{{}^\mu}\mathbf D^{{}^{\geqslant {0}}}(Y).$$
\end{lem}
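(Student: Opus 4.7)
The plan is to verify the support containment by a direct calculation of cohomology sheaves, and then obtain the cosupport containment by applying Verdier duality, using that $\Z^\bullet_\U[n+1]$ is self-dual on the smooth ambient space $\U$.

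For the first containment, since $r$ is a closed inclusion I observe that $r^*[-c]\Z^\bullet_\U[n+1]\cong \Z^\bullet_Y[n+1-c]=\Z^\bullet_Y[\dim Y]$. The only non-zero ordinary cohomology sheaf of this complex is $\Z_Y$ in degree $-\dim Y$, whose support is $Y$ itself, of dimension $\dim Y$. The support condition $\dim\supp H^{-k}(\,\cdot\,)\leq k$ therefore reduces to $\dim Y\leq \dim Y$, which holds. Non-purity of $Y$ creates no difficulty, since every irreducible component has dimension at most $\dim Y$, so the support condition is already satisfied componentwise.

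For the cosupport containment, I would invoke Verdier duality. Because $\U$ is a complex manifold of complex dimension $n+1$, its dualizing complex satisfies $\omega_\U^\bullet\cong\Z^\bullet_\U[2(n+1)]$, and hence the shifted constant sheaf $\Z^\bullet_\U[n+1]$ is self-dual under $\vdual$. Combining this with the natural isomorphism $\vdual\circ r^!\cong r^*\circ\vdual$ for the closed embedding $r$ yields
$$\vdual\bigl(r^![c]\Z^\bullet_\U[n+1]\bigr)\;\cong\;r^*[-c]\,\vdual\bigl(\Z^\bullet_\U[n+1]\bigr)\;\cong\;r^*[-c]\Z^\bullet_\U[n+1].$$
By the first part the right-hand side lies in ${}^{{}^\mu}\mathbf D^{{}^{\leqslant 0}}(Y)$, and since $\vdual$ exchanges ${}^{{}^\mu}\mathbf D^{{}^{\leqslant 0}}$ with ${}^{{}^\mu}\mathbf D^{{}^{\geqslant 0}}$, the left side lies in ${}^{{}^\mu}\mathbf D^{{}^{\geqslant 0}}(Y)$, as required. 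The only mildly delicate point is the self-duality of $\Z^\bullet_\U[n+1]$, which hinges on the smoothness of $\U$ and the standard identification of the dualizing complex on a smooth complex manifold with a shift of the constant sheaf; I do not expect any serious obstacle beyond invoking these standard facts.
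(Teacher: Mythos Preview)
Your argument is correct, but it differs from the paper's for the cosupport half. For the support condition you and the paper proceed identically: $r^*[-c]\Z^\bullet_\U[n+1]\cong\Z^\bullet_Y[\dim Y]$ has a single cohomology sheaf in degree $-\dim Y$, so the support bound is immediate. For the cosupport condition, however, the paper performs a direct costalk computation: for each $x\in Y$ it identifies $H^{p}\big(v_x^!\,r^![c]\Z^\bullet_\U[n+1]\big)$ with $\widetilde H^{\,n+c+p}(S^{2n+1};\Z)$, which is nonzero only for $p=\dim Y$, and reads off the cosupport inequality from that. Your route via Verdier duality and the self-duality of $\Z^\bullet_\U[n+1]$ is slicker, but one point deserves more care than the word ``exchanges'' suggests. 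Over $\Z$ the functor $\vdual$ does send ${}^{{}^\mu}\mathbf D^{{}^{\leqslant 0}}$ into ${}^{{}^\mu}\mathbf D^{{}^{\geqslant 0}}$ (a short Universal Coefficient argument on costalks, using that $\Z$ has global dimension~$1$, gives this), and together with biduality that is exactly the implication you need; but the reverse inclusion $\vdual\big({}^{{}^\mu}\mathbf D^{{}^{\geqslant 0}}\big)\subseteq{}^{{}^\mu}\mathbf D^{{}^{\leqslant 0}}$ can fail over $\Z$ (e.g., $\vdual(\Z/2)$ on a point sits in degree~$1$), so ``exchanges'' is too strong. The paper's hands-on computation avoids this subtlety entirely, which is presumably why the author chose it given the emphasis on integer coefficients; your approach is more conceptual and generalizes immediately to any self-dual perverse sheaf on a smooth ambient space, at the cost of needing to be precise about which direction of the duality/$t$-structure interaction you are invoking.
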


\begin{proof} First, we will show that
$$r^*[-c]\Z^\bullet_Y[n+1]\cong \Z^\bullet_{Y}[\dim Y]\in {}^{{}^\mu}\mathbf D^{{}^{\leqslant {0}}}(\Sigma),$$
which says that $r^*[-c]\Z^\bullet_Y[n+1]$ satisfies the support condition. This argument is simple.

Let $p$ be an integer. We need to show that $\dim \supp^{-p} \big(\Z^\bullet_{Y}[\dim Y]\big)\leq p$. We have 
$$\supp^{-p} \big(\Z^\bullet_{Y}[\dim Y]\big)=\overline{\{x\in Y \, | \, H^{\dim Y-p}( \Z^\bullet_{Y})_x\neq 0\}}.$$
Now $H^{\dim Y-p}( \Z^\bullet_{Y})_x=0$ unless $p=\dim Y$. Thus the support condition is satisfied.

\bigskip

Now we will show that  
$$r^![c]\Z^\bullet_\U[n+1]\in {}^{{}^\mu}\mathbf D^{{}^{\geqslant {0}}}(Y),$$
which says that $r^![c]\Z^\bullet_\U[n+1]$ satisfies the cosupport condition. Let $p$ be an integer. We need to show that $\dim \cosupp^{p} \big(r^![c]\Z^\bullet_\U[n+1]\big)\leq p$.

For all $x\in Y$, let $v_x:\{x\}\hookrightarrow Y$ denote the inclusion, and let $\hat v_x:=r\circ v_x$; hence, $\hat v_x$ is the inclusion of $\{x\}$ into $\U$. Then, we have 
$$\cosupp^{p} \big(r^![c]\Z^\bullet_\U[n+1]\big)=\overline{\{x\in Y \, | \, H^{p}(v_x^!r^![c]\Z^\bullet_\U[n+1])\neq 0\}}=
$$
\smallskip
$$\overline{\{x\in Y \, | \, H^{n+1+c+p}(\hat v_x^!\Z^\bullet_\U)\neq 0\}}=\overline{\{x\in Y \, | \, H^{n+1+c+p}(B^\circ_\epsilon(x), B^\circ_\epsilon(x)\backslash\{x\};\, \Z)\neq 0\}}=$$
\smallskip
$$
\overline{\{x\in Y \, | \, \widetilde H^{n+c+p}(S^{2n+1};\, \Z)\neq 0\}}.
$$

\medskip

\noindent where $B^\circ_\epsilon(x)$ again denotes a open ball of (small) radius $\epsilon>0$, centered at $x$, $S^{2n+1}$ denotes a sphere in $\U$ of real dimension $2n+1$ (its center and radius are irrelevant), and $\widetilde H$ denotes reduced cohomology. Now $\widetilde H^{n+c+p}(S^{2n+1};\, \Z)= 0$ unless $p=n+1-c=\dim Y$. Thus the cosupport condition is satisfied.
\end{proof}

\medskip

Now, as before, let $f:\U\rightarrow \C$ be a nowhere locally constant complex analytic function such that $V(f)$ is non-empty. Note that we have {\bf not} assumed that $f$ is reduced. Let $\Sigma_f:=\supp\phi_f[-1]\Z^\bullet_\U[n+1]$.

There is a notion of the singular set of the analytic set $V(f)$; it is the set of points at which $V(f)$ fails to be an analytic submanifold of $\U$ (which, using results about Milnor fibrations, is also the set of points where $V(f)$ fails to be even a topological submanifold of $\U$). We denote this singular set by $\Sigma V(f)$, and note that it always has dimension at most $n-1$. 

 If $f$ is not reduced, then $\Sigma_f$ will contain an irreducible component of $V(f)$, while $\Sigma V(f)$ will not. However, if $f$ is reduced, then $\Sigma_f=\Sigma V(f)$, and so the intermediate extension to $V(f)$ of the shifted constant sheaf on $V(f)\backslash\Sigma V(f)$ -- the intersection cohomology on $V(f)$ -- is the same as  the intermediate extension to $V(f)$ of the shifted constant sheaf on $V(f)\backslash\Sigma_f$, which is how we defined $\Idot_{V(f)}$ in \defref{defn:intermediate}.

\medskip

As in the general case, we let $j: V(f)\hookrightarrow \U$ and $\hat m:\Sigma_f\hookrightarrow\U$ denote the inclusions.

\medskip

The following lemma explains our terminology in \defref{def:pred}.

\begin{lem}\label{lem:classical}  The function $f$ is reduced (in the algebraic sense)  if and only if $f$ is $\Z^\bullet_\U[n+1]$-reduced, i.e., if and only if 
$$\hat m^*\Z^\bullet_\U[n+1]\in {}^{{}^\mu}\mathbf D^{{}^{\leqslant {-2}}}(\Sigma_f)\hskip 0.2in\textnormal{ and }\hskip 0.2in\hat m^!\Z^\bullet_\U[n+1]\in {}^{{}^\mu}\mathbf D^{{}^{\geqslant {2}}}(\Sigma_f).$$
\end{lem}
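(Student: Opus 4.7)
The plan is to reduce the equivalence to a dimensional criterion for $\Sigma_f$ and then invoke \lemref{lem:fund}. The discussion preceding the lemma already observes that $f$ is reduced (in the algebraic sense) exactly when no irreducible component of $V(f)$ sits inside $\Sigma_f$; since every irreducible component of $V(f)$ has complex dimension $n$ and $\Sigma V(f)$ always has dimension at most $n-1$, the reducedness of $f$ is equivalent to $\dim_\C \Sigma_f \leq n-1$, equivalently to the codimension $c := n+1 - \dim \Sigma_f$ of $\Sigma_f$ in $\U$ being at least $2$.

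For the forward implication, I would assume $f$ is reduced, so $c \geq 2$, and apply \lemref{lem:fund} with $r = \hat m$ and $Y = \Sigma_f$. That gives $\hat m^*[-c]\Z^\bullet_\U[n+1] \in {}^{{}^\mu}\mathbf D^{{}^{\leqslant 0}}(\Sigma_f)$ and $\hat m^![c]\Z^\bullet_\U[n+1] \in {}^{{}^\mu}\mathbf D^{{}^{\geqslant 0}}(\Sigma_f)$. Undoing the shifts via ${}^{\mu}\hskip -0.02in H^k(\mathbf A[s]) = {}^{\mu}\hskip -0.02in H^{k+s}(\mathbf A)$ yields $\hat m^*\Z^\bullet_\U[n+1] \in {}^{{}^\mu}\mathbf D^{{}^{\leqslant -c}}(\Sigma_f) \subseteq {}^{{}^\mu}\mathbf D^{{}^{\leqslant -2}}(\Sigma_f)$ and $\hat m^!\Z^\bullet_\U[n+1] \in {}^{{}^\mu}\mathbf D^{{}^{\geqslant c}}(\Sigma_f) \subseteq {}^{{}^\mu}\mathbf D^{{}^{\geqslant 2}}(\Sigma_f)$, which is precisely the $\Z^\bullet_\U[n+1]$-reducedness condition.

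For the converse I would argue by contrapositive, needing only to break the support half. Assume $f$ is not reduced, so there is an irreducible component $C$ of $V(f)$ contained in $\Sigma_f$, necessarily of complex dimension $n$. I would pick $x\in C$ on a Zariski-dense open subset such that $x$ is a smooth point of $C$ and lies in no other irreducible component of $\Sigma_f$; then a sufficiently small open neighborhood $V$ of $x$ in $\Sigma_f$ sits inside $C$ and is a smooth $n$-dimensional complex manifold. On $V$, $\hat m^*\Z^\bullet_\U[n+1]|_V \cong \Z^\bullet_V[n+1]$, and since $\Z^\bullet_V[n]$ is perverse, the shift $\Z^\bullet_V[n+1]$ has ${}^{\mu}\hskip -0.02in H^{-1} \cong \Z^\bullet_V[n] \neq 0$. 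Hence ${}^{\mu}\hskip -0.02in H^{-1}(\hat m^*\Z^\bullet_\U[n+1])$ is nonzero, contradicting $\hat m^*\Z^\bullet_\U[n+1] \in {}^{{}^\mu}\mathbf D^{{}^{\leqslant -2}}(\Sigma_f)$.

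The only bookkeeping that requires care is the translation of \lemref{lem:fund}'s conditions on $r^*[-c]$ and $r^![c]$ into the unshifted bounds $-2$ and $2$ appearing in \defref{def:pred}; the geometric substance is already encoded in the dimensional characterization of reducedness and in \lemref{lem:fund} itself.
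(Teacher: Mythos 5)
Your forward direction ($f$ reduced $\Rightarrow$ $\Z^\bullet_\U[n+1]$-reduced) coincides exactly with the paper's: reduce to $c = n+1 - \dim\Sigma_f \geq 2$ and apply \lemref{lem:fund} to $r = \hat m$, then unwind the shifts. For the converse, however, you take a genuinely different and more elementary route. The paper's proof of the implication ``$\Z^\bullet_\U[n+1]$-reduced $\Rightarrow$ $f$ reduced'' simply cites Item~(4) of \thmref{thm:mainshort}: under the $\Pdot$-reduced hypothesis, $\Sigma_f$ contains no irreducible component of $\supp j^*[-1]\Pdot = V(f)$, hence $f$ is reduced. Your contrapositive argument instead works locally: if $f$ is not reduced, $\Sigma_f$ contains an $n$-dimensional component $C$ of $V(f)$, and over a smooth Zariski-open piece $V$ of $C$ disjoint from the other components of $\Sigma_f$ you compute $\hat m^*\Z^\bullet_\U[n+1]|_V \cong \Z^\bullet_V[n+1]$ with ${}^{\mu}\hskip -0.02in H^{-1}$ nonzero (using that perverse cohomology commutes with open restriction), breaking the support condition directly. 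Both are correct. The paper's approach leverages machinery already established in \thmref{thm:mainshort} and is shorter on the page; your approach is self-contained, only touches the support half, and makes the geometric content of the failure explicit, at the cost of a genericity choice of $x$ on $C$ that the paper avoids.
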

\begin{proof} 	Suppose that $$\hat m^*\Z^\bullet_\U[n+1]\in {}^{{}^\mu}\mathbf D^{{}^{\leqslant {-2}}}(\Sigma_f)\hskip 0.2in\textnormal{ and }\hskip 0.2in\hat m^!\Z^\bullet_\U[n+1]\in {}^{{}^\mu}\mathbf D^{{}^{\geqslant {2}}}(\Sigma_f).$$
Then, Item (4) of \thmref{thm:mainshort} tells us that $\Sigma_f$ does not contain an irreducible component of $V(f)$, i.e., $f$ is reduced.

\bigskip

Now we must prove the converse. Let $c$ be the codimension of $\Sigma f$ in $\U$, i.e., $c=n+1-\dim \Sigma_f$. Assume that $f$ is reduced, so that $\dim\Sigma_f\leq n-1$ and so $c\geq 2$. 

By \lemref{lem:fund}, we have that
$$\hat m^*[-c]\Z^\bullet_\U[n+1]\in {}^{{}^\mu}\mathbf D^{{}^{\leqslant {0}}}(\Sigma_f)\hskip 0.2in\textnormal{ and }\hskip 0.2in \hat m^![c]\Z^\bullet_\U[n+1]\in {}^{{}^\mu}\mathbf D^{{}^{\geqslant {0}}}(\Sigma_f),$$
that is 
$$\hat m^*\Z^\bullet_\U[n+1]\in {}^{{}^\mu}\mathbf D^{{}^{\leqslant {-c}}}(\Sigma_f)\hskip 0.2in\textnormal{ and }\hskip 0.2in \hat m^!\Z^\bullet_\U[n+1]\in {}^{{}^\mu}\mathbf D^{{}^{\geqslant {c}}}(\Sigma_f).$$
As $c\geq 2$, we are finished.
\medskip

\end{proof}

\medskip

From this lemma and \thmref{thm:mainshort}, we immediately conclude a new proof of ($\dagger$) and ($\ddagger$) from the introduction, which we state here as:

\smallskip

\begin{thm}\label{thm:class} Suppose that $f$ is reduced. Then, there are short exact sequences in the abelian category of perverse sheaves on $V(f)$:
$$
0\to \ker\{\operatorname{id}-\widetilde T_f\}\to \Z^\bullet_{V(f)}[n]\rightarrow \Idot_{V(f)}\to 0,
$$
and
$$
0\to \Idot_{V(f)}\to j^![1]\Z_\U^\bullet[n+1]\to\coker\{\operatorname{id}-\widetilde T_f\}\to 0,
$$

\medskip

\noindent where $\Idot_{V(f)}$ is the intersection cohomology complex on $V(f)$.
\end{thm}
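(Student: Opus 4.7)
The plan is to deduce \thmref{thm:class} as an essentially immediate corollary of \thmref{thm:mainshort} applied to the perverse sheaf $\Pdot := \Z^\bullet_\U[n+1]$ on the smooth space $\U \subseteq \C^{n+1}$. The only real work is to match notation: to identify the two end terms $j^*[-1]\Pdot$ and $\Idot_{V(f)}$ appearing in the general theorem with the classical objects $\Z^\bullet_{V(f)}[n]$ and the intersection cohomology complex.

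First I would observe that $\Pdot = \Z^\bullet_\U[n+1]$ is indeed perverse on $\U$ because $\U$ is smooth of pure dimension $n+1$. Next, by \lemref{lem:classical}, the hypothesis that $f$ is reduced (in the algebraic sense) is equivalent to $f$ being $\Pdot$-reduced in the sense of \defref{def:pred}, so the hypotheses of \thmref{thm:mainshort} are met. Item (5) of that theorem then hands us, verbatim, the two short exact sequences
\[
0\to \ker\{\id-\widetilde T_f\}\to j^*[-1]\Pdot\to \Idot_{V(f)}\to 0,\qquad 0\to \Idot_{V(f)}\to j^![1]\Pdot\to \coker\{\id-\widetilde T_f\}\to 0,
\]
with $\Idot_{V(f)}$ as in \defref{defn:intermediate}.

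It remains to make the two identifications. For the first end term, since $j$ is a closed inclusion, $j^*[-1]\Z^\bullet_\U[n+1] = \Z^\bullet_{V(f)}[n]$, which matches the second term of the first sequence in the statement. For $\Idot_{V(f)}$, I would use that because $f$ is reduced the dimension of $\Sigma V(f)$ is at most $n-1$, and \thmref{thm:mainshort}(4) together with the reducedness of $f$ ensures that $\Sigma_f = \Sigma V(f)$ (any discrepancy would force $\Sigma_f$ to contain an irreducible component of $V(f)$). Consequently $V(f)\setminus\Sigma_f$ is exactly the smooth locus of $V(f)$, and on it $\ell^*j^*[-1]\Pdot$ is the shifted constant sheaf $\Z^\bullet_{V(f)\setminus\Sigma V(f)}[n]$; its intermediate extension to $V(f)$ is, by definition, the intersection cohomology complex $\Idot_{V(f)}$ with constant $\Z$ coefficients. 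Substituting these identifications yields precisely the two stated sequences.

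The main potential obstacle is conceptual rather than computational: one has to be sure that the $\Idot_{V(f)}$ of \defref{defn:intermediate} really does agree with the classical IC sheaf, and this is exactly where the reducedness of $f$ is used (to collapse $\Sigma_f$ onto the singular locus $\Sigma V(f)$). Everything else is immediate from \thmref{thm:mainshort} and \lemref{lem:classical}; no further invocation of \propref{prop:prewang} or \thmref{thm:wang} is needed.
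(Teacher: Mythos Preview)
Your proposal is correct and follows essentially the same route as the paper: apply \lemref{lem:classical} to verify that $f$ reduced implies $f$ is $\Z^\bullet_\U[n+1]$-reduced, invoke \thmref{thm:mainshort}(5), and identify $j^*[-1]\Z^\bullet_\U[n+1]$ with $\Z^\bullet_{V(f)}[n]$ and $\Idot_{V(f)}$ with the intersection cohomology complex. One small quibble: your appeal to \thmref{thm:mainshort}(4) only shows that $\Sigma_f$ contains no component of $V(f)$, not that $\Sigma_f=\Sigma V(f)$; the latter equality (which the paper records just before \lemref{lem:classical}) is the classical fact that for reduced $f$ the vanishing cycles of the constant sheaf are supported exactly on the singular locus, and that is what actually gives the identification of $\Idot_{V(f)}$ with the IC sheaf.
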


\bigskip

As our final result, we will prove a theorem about integral cohomology (homology) manifolds. First, we need a lemma.

\medskip

\begin{lem}\label{lem:prehomman} For all $x\in V(f)$, let $E_{f,x}$ be the total space of the Milnor fibration of $f$ at $x$. Then the following are equivalent:

\begin{enumerate}
\item $j^![1]\Z_\U^\bullet[n+1]$ has stalk cohomology isomorphic to that of $j^*[-1]\Z_\U^\bullet[n+1]\cong \Z_{V(f)}^\bullet[n]$, and
\medskip
\item for all $x\in V(f)$, 
$$
H^{k}(E_{f,x}; \Z)\cong
 \begin{cases}
\Z, \ \textnormal{ if } k=1, 0;\\
0, \ \textnormal{ if } k\neq 1,0.
\end{cases}
$$
\end{enumerate}
\end{lem}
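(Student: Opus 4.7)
The plan is to translate both conditions into statements about the local relative cohomology of the pair $(\U,\U\setminus V(f))$ at points of $V(f)$, and then verify the equivalence via the long exact sequence of a pair; the Wang transformation plays no essential role here, as all of the work happens at the stalk level. First, I would compute the two stalk cohomologies at an arbitrary $x\in V(f)$. The identification $j^*[-1]\Z^\bullet_\U[n+1]\cong \Z^\bullet_{V(f)}[n]$ makes its stalk cohomology equal to $\Z$ concentrated in degree $-n$. For the $j^![1]$ side, the standard stalk formula for $j^!$ yields
$$
H^k\bigl(j^![1]\Z^\bullet_\U[n+1]\bigr)_x \ \cong\ \hyp^{k+n+2}\bigl(B^\circ_\epsilon(x),\, B^\circ_\epsilon(x)\setminus V(f);\, \Z\bigr)
$$
for all $\epsilon>0$ sufficiently small. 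Hence condition (1) amounts to saying that this relative cohomology group is $\Z$ when $k+n+2=2$ and vanishes in every other non-negative degree.

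Next, I would invoke Milnor's fibration theorem: for $\epsilon>0$ sufficiently small, $B^\circ_\epsilon(x)\setminus V(f)$ deformation retracts onto a space homotopy-equivalent to the total space $E_{f,x}$ of the Milnor fibration. Since $B^\circ_\epsilon(x)$ is contractible and $E_{f,x}$ is nonempty, the long exact sequence of the pair $(B^\circ_\epsilon(x),E_{f,x})$ collapses to give $\hyp^0(B^\circ_\epsilon,E_{f,x})=0$, $\hyp^1(B^\circ_\epsilon,E_{f,x})\cong H^0(E_{f,x};\Z)/\Z$, $\hyp^2(B^\circ_\epsilon,E_{f,x})\cong H^1(E_{f,x};\Z)$, and $\hyp^p(B^\circ_\epsilon,E_{f,x})\cong H^{p-1}(E_{f,x};\Z)$ for every $p\geq 3$.

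Reading off the equivalence, condition (1) now translates into the three simultaneous requirements $H^0(E_{f,x};\Z)/\Z=0$, $H^1(E_{f,x};\Z)=\Z$, and $H^q(E_{f,x};\Z)=0$ for every $q\geq 2$; the first of these, combined with the fact that $H^0(E_{f,x};\Z)$ contains $\Z$ as a subgroup (as $E_{f,x}$ is nonempty), forces $H^0(E_{f,x};\Z)=\Z$. This is precisely condition (2). The only nontrivial ingredient is the homotopy equivalence $B^\circ_\epsilon(x)\setminus V(f)\simeq E_{f,x}$ coming from the Milnor fibration; the rest is formal bookkeeping with the stalk formula for $j^!$ and the long exact sequence of a pair, so I do not anticipate a serious obstacle beyond taking care with the degree shifts and the low-degree terms.
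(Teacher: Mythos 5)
Your proposal is correct and follows essentially the same approach as the paper: identify the stalks of $j^![1]\Z^\bullet_\U[n+1]$ with the local relative cohomology $\hyp^{k+n+2}(B^\circ_\epsilon(x), B^\circ_\epsilon(x)\setminus V(f);\Z)$, use the Milnor fibration to replace $B^\circ_\epsilon(x)\setminus V(f)$ by $E_{f,x}$ up to homotopy, and then translate via the contractibility of the ball. The paper passes directly to reduced cohomology $\widetilde H^{k-1}(E_{f,x};\Z)$ rather than unrolling the long exact sequence of the pair, but this is only a cosmetic difference.
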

\begin{proof} Item (1) means that
$$
H^p(j^![1]\Z_\U^\bullet[n+1])_x \cong \begin{cases}
\Z, \ \textnormal{ if } p=-n;\\
0, \ \textnormal{ if } p\neq -n.
\end{cases}
$$
Now,
$$
H^p(j^![1]\Z_\U^\bullet[n+1])_x \cong \hyp^{n+p+2}(B_\epsilon^\circ(x),\, B_\epsilon^\circ(x)\backslash V(f); \Z) \cong \widetilde H^{n+p+1}(B_\epsilon^\circ(x)\backslash V(f); \Z),
$$
where $B_\epsilon^\circ(x)$ is a small open ball, centered at $x$, and $B_\epsilon^\circ(x)\backslash V(f)$ is homotopy-equivalent to $E_{f,x}$, the total space of the Milnor fibration of $f$ at $x$.

Thus, Item (1) is equivalent to
$$
\widetilde H^{k}(E_{f,x}; \Z)\cong
 \begin{cases}
\Z, \ \textnormal{ if } k=1;\\
0, \ \textnormal{ if } k\neq 1,
\end{cases}
$$
or, equivalently,
$$
H^{k}(E_{f,x}; \Z)\cong
 \begin{cases}
\Z, \ \textnormal{ if } k=1, 0;\\
0, \ \textnormal{ if } k\neq 1,0.
\end{cases}
$$
\end{proof}

\medskip

\begin{thm}\label{thm:pretau} Suppose that $f$ is reduced, and that $j^![1]\Z_\U^\bullet[n+1]$ has stalk cohomology isomorphic to that of $j^*[-1]\Z_\U^\bullet[n+1]$. Then,

\begin{enumerate}
\item $\id-\widetilde T_f$ is an isomorphism, 

\medskip

\item $j^*[-1]\Z_\U^\bullet[n+1]$ and $j^![1]\Z_\U^\bullet[n+1]$ are perverse sheaves,
\medskip
\item  $\omega_f:j^*[-1]\Z_\U^\bullet[n+1]\rightarrow j^![1]\Z_\U^\bullet[n+1]$ is an isomorphism, and both of these complexes are isomorphic to $\Idot_{V(f)}$,
\medskip
\item $V(f)$ is an integral cohomology/homology manifold, and
\medskip
\item there is an isomorphism 
$$
\psi_f[-1]\Z_\U^\bullet[n+1]\cong \phi_f[-1]\Z_\U^\bullet[n+1]\oplus \Idot_{V(f)}.
$$
\end{enumerate}
\end{thm}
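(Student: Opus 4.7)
Since $f$ is reduced, \lemref{lem:classical} gives that $f$ is $\Z^\bullet_\U[n+1]$-reduced, so \thmref{thm:mainshort} applies to $\Pdot=\Z^\bullet_\U[n+1]$. This immediately yields (2), together with the two short exact sequences
$$0\to\ker\{\id-\widetilde T_f\}\to j^*[-1]\Z^\bullet_\U[n+1]\to\Idot_{V(f)}\to 0,$$
$$0\to\Idot_{V(f)}\to j^![1]\Z^\bullet_\U[n+1]\to\coker\{\id-\widetilde T_f\}\to 0,$$
and the composition of the surjection in the first with the injection in the second is exactly $\omega_f$. By \thmref{thm:split}, proving that $\omega_f$ is an isomorphism, or equivalently that $\ker\{\id-\widetilde T_f\}$ and $\coker\{\id-\widetilde T_f\}$ both vanish, will immediately deliver (1), the remaining content of (3), and (5); conclusion (4) will be an almost free byproduct of the topological input.

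The key geometric step is to translate the stalk hypothesis into a topological statement about $V(f)$ via \lemref{lem:prehomman}: for each $x\in V(f)$, the Milnor-fibration total space $E_{f,x}$ has the integral cohomology of $S^1$. Since $E_{f,x}$ is homotopy equivalent to $S^{2n+1}\setminus L_x$, where $L_x:=V(f)\cap\partial B_\epsilon(x)$ is the real link of $V(f)$ at $x$, Alexander duality in $S^{2n+1}$ forces $L_x$ to have the integral cohomology (and, by the universal coefficient theorem, the integral homology) of $S^{2n-1}$. In particular, $L_x$ is connected, so $V(f)$ is locally irreducible at every point, and the local cone structure of $V(f)$ at $x$ gives $H_k(V(f),V(f)\setminus\{x\};\Z)\cong\widetilde H_{k-1}(L_x;\Z)$, which is $\Z$ for $k=2n$ and zero otherwise; this is conclusion (4).

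Once $V(f)$ is an integral homology manifold of real dimension $2n$, Poincar\'e duality yields $\omega_{V(f)}\cong\Z^\bullet_{V(f)}[2n]$, so $\Z^\bullet_{V(f)}[n]$ is Verdier self-dual and perverse. Combined with $\dim\Sigma_f\leqslant n-1$ (from $f$ being reduced), one checks the strict inequalities $m^*\Z^\bullet_{V(f)}[n]\in{}^{{}^\mu}\mathbf D^{{}^{\leqslant -1}}(\Sigma_f)$ and $m^!\Z^\bullet_{V(f)}[n]\in{}^{{}^\mu}\mathbf D^{{}^{\geqslant 1}}(\Sigma_f)$ that characterize the intermediate extension, giving $\Z^\bullet_{V(f)}[n]\cong\Idot_{V(f)}$; the analogous identification $j^![1]\Z^\bullet_\U[n+1]\cong\Idot_{V(f)}$ follows by Verdier duality, since $\vdual\Z^\bullet_\U[n+1]\cong\Z^\bullet_\U[n+1]$ on the smooth ambient $\U$ and $\vdual(j^*[-1])\cong j^![1]\vdual$. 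Thus $\Z^\bullet_{V(f)}[n]$ admits no nonzero subperverse sheaf supported on $\Sigma_f$ and $j^![1]\Z^\bullet_\U[n+1]$ admits no nonzero perverse quotient supported on $\Sigma_f$, so the two short exact sequences above collapse to $\ker\{\id-\widetilde T_f\}=0=\coker\{\id-\widetilde T_f\}$, and \thmref{thm:split} finishes the proof.

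The step I expect to be hardest is the identification of both $\Z^\bullet_{V(f)}[n]$ and $j^![1]\Z^\bullet_\U[n+1]$ with $\Idot_{V(f)}$ in the third paragraph, because it requires the full force of the homology-manifold structure (for perversity and self-duality) together with the local irreducibility of $V(f)$ (to rule out extra subquotients supported on $\Sigma_f$); both pieces are precisely what the Alexander-duality computation on the link $L_x$ is called upon to supply.
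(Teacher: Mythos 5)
Your proof is correct, but it follows a genuinely different route from the paper's. The paper proves (1) first and most directly: it applies the cohomological Wang exact sequence (Milnor, Lemma 8.4) relating $H^*(E_{f,x})$, $H^*(F_{f,x})$ and $\id-T_f$, and uses the hypothesis that $E_{f,x}$ has the cohomology of a circle together with the torsion-freeness of $H^1(F_{f,x})$ to show that $(\id-T_f)^k_x$ is an isomorphism for $k\neq -n$; since $T^k_{f,x}\cong\widetilde T^k_{f,x}$ outside degree $-n$ and the vanishing-cycle stalk in degree $-n$ is zero (as $F_{f,x}$ is connected), $\id-\widetilde T_f$ is a stalkwise --- hence global --- isomorphism. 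Items (2), (3), (5) then drop out of \thmref{thm:split} and \thmref{thm:class}, and (4) is obtained \emph{last}, by applying $v_x^!$ to the isomorphism $\omega_f$ from Item (3). You invert this order: from the topological input you go straight to (4) via Alexander duality on the real link $L_x$, then upgrade the homology-manifold property to self-duality and perversity of $\Z^\bullet_{V(f)}[n]$, identify both $j^*[-1]\Z^\bullet_\U[n+1]$ and (by Verdier duality) $j^![1]\Z^\bullet_\U[n+1]$ with $\Idot_{V(f)}$, and then kill $\ker\{\id-\widetilde T_f\}$ and $\coker\{\id-\widetilde T_f\}$ using the fact that an intermediate extension admits no nonzero sub- or quotient-perverse sheaf supported on $\Sigma_f$. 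That argument is sound, and arguably more conceptual, at the cost of invoking Alexander duality, the dualizing complex of a $\Z$-homology manifold, and the abstract characterization of $j_{!*}$, where the paper only needs an elementary diagram chase.

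One small but genuine error: you assert that connectedness of $L_x$ implies $V(f)$ is locally irreducible at $x$. That is false --- for $V(xy)\subset\C^3$ the link at the origin is two $3$-spheres meeting along a circle, which is connected, yet the germ is reducible. Fortunately your argument never actually uses local irreducibility; the homology-manifold conclusion follows from the cone structure $H_k(V(f),V(f)\setminus\{x\})\cong\widetilde H_{k-1}(L_x)$ alone, and the ``no extra subquotients on $\Sigma_f$'' step is supplied by the intermediate-extension property, not by irreducibility. So the stray claim (and the remark about it in your final paragraph) should simply be deleted; nothing else in the proof depends on it.
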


\medskip

\begin{proof} 
The non-zero part of the cohomological version of the Wang sequence from Lemma 8.4 in \cite{milnorsing} begins as follows:

\bigskip

$$0\rightarrow H^0(E_{f,x};\, \Z)\rightarrow H^0(F_{f,x};\, \Z)\xrightarrow{(\operatorname{id}-T_f)^{-n}_x}H^0(F_{f,x};\, \Z)\rightarrow 
$$

$$H^1(E_{f,x};\, \Z)\rightarrow H^1(F_{f,x};\, \Z)\xrightarrow{(\operatorname{id}-T_f)^{-n+1}_x}H^1(F_{f,x};\, \Z)\rightarrow 
$$

$$H^2(E_{f,x};\, \Z)\rightarrow H^2(F_{f,x};\, \Z)\xrightarrow{(\operatorname{id}-T_f)^{-n+2}_x}H^2(F_{f,x};\, \Z)\rightarrow,$$

\medskip

\noindent where $F_{f,x}$ is the Milnor fiber of $f$ at $x$ , $T_f$ is the monodromy automorphism on the {\bf nearby} cycles, and the subscript $k$ in $(\operatorname{id}-T_f)^{k}_x$ denotes the degree (not exponentiation).

By \lemref{lem:prehomman}, we know that
$$
H^{k}(E_{f,x}; \Z)\cong
 \begin{cases}
\Z, \ \textnormal{ if } k=1, 0;\\
0, \ \textnormal{ if } p\neq 1,0.
\end{cases}
$$
Since $f$ is reduced, $F_{f,x}$ and $E_{f,x}$ are path-connected. We also know that $H^1(F_{f,x};\, \Z)$ is torsion-free by the Universal Coefficient Theorem. Therefore we obtain the exact sequence

$$0\rightarrow \Z\rightarrow \Z\xrightarrow{(\operatorname{id}-T_f)^{-n}_x}\Z\rightarrow\Z\rightarrow H^1(F_{f,x};\, \Z)\xrightarrow{(\operatorname{id}-T_f)^{-n+1}_x}H^1(F_{f,x};\, \Z)\rightarrow 
$$

$$0\rightarrow H^2(F_{f,x};\, \Z)\xrightarrow{(\operatorname{id}-T_f)^{-n+2}_x}H^2(F_{f,x};\, \Z)\rightarrow 0\dots,$$

\medskip

\noindent where the first map from $\Z$ to $\Z$ is an isomorphism, $(\operatorname{id}-T_f)^{-n}_x=0$, the third map from $\Z$ to $\Z$ is an isomorphism, and $(\operatorname{id}-T_f)^{k}_x$ is an isomorphism for $k\neq -n$. However, $T^k_{f,x}\cong \widetilde T^k_{f,x}$ outside of degree $-n$. Therefore, for all $x\in V(f)$, $(\operatorname{id}-\widetilde T_f)^{*}_x$ is an isomorphism and thus so is $\operatorname{id}-\widetilde T_f$.

\medskip

Now Items (2), (3), and (5) are immediate from \thmref{thm:split} and \thmref{thm:class}. It remains for us to demonstrate Item (4).

\medskip
For all $x\in V(f)$, let $v_x:\{x\}\hookrightarrow V(f)$ denote the inclusion. Let $\hat v_x:=j\circ v_x$ so that $\hat v_x$ is the inclusion of $\{x\}$ into $\U$. Applying $v_x^!$ to the isomorphism in Item (3), for all $x\in V(f)$, we have an isomorphism
$$
v_x^!j^*[-1]\Z_\U^\bullet[n+1]\cong v_x^!j^![1]\Z_\U^\bullet[n+1]\cong {\hat v}_x^![1]\Z_\U^\bullet[n+1]. \leqno{(\star)}
$$
Let $B^\circ_\epsilon(x)$ denote an open ball in $\U$ of (small) radius $\epsilon>0$, centered at $x$. Then ($\star$) tells us that, for all $p$, 
$$
\hyp^{p+n}(B^\circ_\epsilon(x)\cap V(f),\, B^\circ_\epsilon(x)\cap V(f)\backslash \{x\};\,\Z)\ \cong\ \hyp^{p+n+2}(B^\circ_\epsilon(x),\, B^\circ_\epsilon(x)\backslash \{x\};\,\Z),
$$
i.e., for all $k$,
$$
\hyp^{k}(B^\circ_\epsilon(x)\cap V(f),\, B^\circ_\epsilon(x)\cap V(f)\backslash \{x\};\,\Z)\ \cong\ \widetilde \hyp^{k+1}(S^{2n+1};\,\Z).
$$
As the real dimension of $V(f)$ is $2n$, we conclude that $V(f)$ is an integral cohomology/homology manifold.

\end{proof}

\medskip

\section{A question}

We continue with all of our previous notation.

\medskip

\thmref{thm:wang} and the definition of the intermediate extension motivate the following definition.

\medskip

\begin{defn} We define the {\bf intermediate Wang restriction} of $\Pdot$ to be the (perverse) image
$$
\Jdot_f:=\im\Big\{{}^{\mu}\hskip -0.02in H^0(j^*[-1]\Pdot)\xlongrightarrow{ {}^{\mu}\hskip -0.02in H^0(\omega_f)}{}^{\mu}\hskip -0.02in H^0(j^![1]\Pdot)\Big\}.
$$
\end{defn}

\medskip

The point is that now \thmref{thm:wang} and \thmref{thm:mainshort} tell us immediately that we have:

\medskip

\begin{prop} There are exact sequences in $\operatorname{Perv}(V(f))$:

$$
0\rightarrow\frac{\ker\{\id-\widetilde T_f\}}{\ker\{\operatorname{var}\}}\longrightarrow{}^{\mu}\hskip -0.02in H^0(j^*[-1]\Pdot)\xlongrightarrow{\alpha}\Jdot_f\rightarrow 0
$$
and 
$$
0\rightarrow \Jdot_f\xlongrightarrow{\beta} {}^{\mu}\hskip -0.02in H^0(j^![1]\Pdot)\longrightarrow\frac{\operatorname{im}\{\operatorname{can}\}}{\operatorname{im}\{\id-\widetilde T_f\}}\rightarrow 0,
$$
where $\beta\circ\alpha=  {}^{\mu}\hskip -0.02in H^0(\omega_f)$.

Furthermore, if $\hat m^*\Pdot\in {}^{{}^\mu}\mathbf D^{{}^{\leqslant {-2}}}(\Sigma)$ and $\hat m^!\Pdot\in {}^{{}^\mu}\mathbf D^{{}^{\geqslant {2}}}(\Sigma)$, then these two short exact sequences collapse to those of \thmref{thm:mainshort}; in particular, the intermediate Wang restriction $\Jdot_f$ is isomorphic to the intermediate extension $\Idot_{V(f)}$.
\end{prop}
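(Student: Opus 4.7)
The plan is to split the two assertions. The first is a purely formal consequence of \thmref{thm:wang} about four-term exact sequences in an abelian category, while the second is an immediate specialization once the hypothesis lets us invoke \thmref{thm:mainshort}.

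For the first part I would appeal to the standard fact that any four-term exact sequence $0 \to A \to B \xrightarrow{\phi} C \to D \to 0$ in an abelian category factors canonically through $\im\phi$ as two short exact sequences $0 \to A \to B \xrightarrow{\alpha} \im\phi \to 0$ and $0 \to \im\phi \xrightarrow{\beta} C \to D \to 0$ with $\beta\circ\alpha = \phi$. Applied to the four-term sequence of \thmref{thm:wang}, taking $\phi := {}^{\mu}\hskip -0.02in H^0(\omega_f)$ and using the definition $\Jdot_f := \im\phi$, this yields the two displayed short exact sequences together with the relation $\beta\circ\alpha = {}^{\mu}\hskip -0.02in H^0(\omega_f)$ at no additional cost.

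For the second part I would assume the $\Pdot$-reduced hypothesis of \defref{def:pred}, which is exactly what \thmref{thm:mainshort} requires. Its item (1) tells us that $j^*[-1]\Pdot$ and $j^![1]\Pdot$ are already perverse, so ${}^{\mu}\hskip -0.02in H^0$ acts as the identity on each; in particular, the middle terms in my two sequences become $j^*[-1]\Pdot$ and $j^![1]\Pdot$. Feeding this same perversity into \propref{prop:prewang} further gives $\coker\{\operatorname{can}\} = {}^{\mu}\hskip -0.02in H^1(j^*[-1]\Pdot) = 0$ and $\ker\{\operatorname{var}\} = {}^{\mu}\hskip -0.02in H^{-1}(j^![1]\Pdot) = 0$. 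The first vanishing collapses $\im\{\operatorname{can}\}/\im\{\id-\widetilde T_f\}$ to $\coker\{\id-\widetilde T_f\}$, and the second collapses $\ker\{\id-\widetilde T_f\}/\ker\{\operatorname{var}\}$ to $\ker\{\id-\widetilde T_f\}$, so the two short exact sequences take exactly the form asserted in item (5) of \thmref{thm:mainshort}.

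To finish, I would identify $\Jdot_f$ with $\Idot_{V(f)}$ by noting that, under the collapse just described, $\Jdot_f = \im\{{}^{\mu}\hskip -0.02in H^0(\omega_f)\} = \im\{\omega_f\}$ between the perverse sheaves $j^*[-1]\Pdot$ and $j^![1]\Pdot$; but this is precisely the object called $\Jdot$ in the proof of \thmref{thm:mainshort} and shown there to be isomorphic to $\Idot_{V(f)}$. The entire argument is thus bookkeeping on top of the two earlier theorems, with no anticipated obstacle beyond keeping careful track of which perversity forces which subquotient in the Wang four-term sequence to collapse.
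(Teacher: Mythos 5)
Your proof is correct and follows the same route the paper intends: the paper simply asserts that the result follows ``immediately'' from \thmref{thm:wang} and \thmref{thm:mainshort}, and you have filled in exactly the expected details — factoring the four-term exact sequence of \thmref{thm:wang} through $\Jdot_f=\im\{{}^{\mu}\hskip -0.02in H^0(\omega_f)\}$, then using perversity of $j^*[-1]\Pdot$ and $j^![1]\Pdot$ (via \propref{prop:prewang}) to kill $\coker\{\operatorname{can}\}$ and $\ker\{\operatorname{var}\}$, and finally matching $\Jdot_f$ with the $\Jdot\cong\Idot_{V(f)}$ from the proof of \thmref{thm:mainshort}. No gaps.
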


\medskip

Of course, the big question is:

\medskip

\begin{ques}
Does $\Jdot_f$ have any interesting properties in general, even when it is not isomorphic to the intermediate extension $\Idot_{V(f)}$?
\end{ques}

\bibliographystyle{plain}

\bibliography{Masseybib}

\end{document}